\newtheorem{theorem}{Theorem}[section]
\newtheorem*{theorem-no-number}{Theorem}
\newtheorem*{conj}{Conjecture}
\newtheorem{lemma}{Lemma}[section]
\newtheorem{corollary}{Corollary}[section]
\newtheorem{definition}{Definition}[section]
\newtheorem*{definition-no-number}{Definition}
\title{On $1$-bridge braids, satellite knots, the manifold $v2503$ and non-left-orderable surgeries and fillings}
\author{Zipei Nie}
\begin{document}
\maketitle

\begin{abstract}
We define the property (D) for nontrivial knots. We show that the fundamental group of the manifold obtained by Dehn surgery on a knot $K$ with property (D) with slope $\frac{p}{q}\ge 2g(K)-1$ is not left orderable. By making full use of the fixed point method, we prove that (1) nontrivial knots which are closures of positive $1$-bridge braids have property (D); (2) L-space satellite knots, with positive $1$-bridge braid patterns, and companion with property (D), have property (D); (3) the fundamental group of the manifold obtained by Dehn filling on $v2503$ is not left orderable. Additionally, we prove that L-space twisted torus knots of form $T_{p,kp\pm 1}^{l,m}$ are closures of positive $1$-bridge braids.
\end{abstract} 
\section{Introduction}\label{introduction}

A nontrivial group $G$ is called left orderable if it admits a total ordering $\le$ which is invariant under left multiplication, which is, $g\le h$ implies $fg\le fh$ for any $f,g,h$ in $G$. The trivial group is not left orderable by convention. A rational homology $3$-sphere $Y$ is called an L-space if the hat version of its Heegaard Floer homology has rank $|H_1(Y)|$. Boyer, Gordon and Watson conjectured \cite{BGW} that an irreducible rational homology $3$-sphere is an L-space if and only if its fundamental group is not left orderable. This conjecture is verified \cite{BGW} for geometric, non-hyperbolic $3$-manifolds and $2$-fold branched covers of non-split alternating links.

One way to construct hyperbolic L-spaces is via Dehn surgery. A knot $K$ in $S^3$ is called a (positive) L-space knot if there exists a positive Dehn surgery along $K$ which yields an L-space. For any nontrivial L-space knot $K$ in $S^3$, the Dehn surgery along $K$ with slope $\frac{p}{q}$ yields an L-space if and only if $\frac{p}{q}\ge 2g(K)-1$ \cite{He,Oz}.

A twisted torus knot $T_{p,q}^{l,m}$ is defined as the knot obtained from the $(p,q)$-torus knot by twisting $l$ strands $m$ full times. It can be realized as the closure of the braid
$$(\sigma_{l-1}\sigma_{l-2}\cdots \sigma_2\sigma_1)^{l m}(\sigma_{p-1}\sigma_{p-2}\cdots\sigma_2 \sigma_1)^{q}$$
on $q$ strands. For positive integers $p,k,l,m$ with $0<l<p$, Vafaee proved that \cite{Va} the twisted torus knot $T_{p,kp\pm 1}^{l,m}$ is an L-space knot if and only if 
\begin{enumerate}[label=(\alph*)]
\item $l=p-1$, or
\item $m=1$ and $l=2$, or
\item $m=1$ and $l=p-2$.
\end{enumerate}
A number of research papers \cite{Chr,Clay,  Ichi1, Ichi2,Jun, Na, Nie, Tran1} have been devoted to study non-left-orderable surgeries on subfamilies of the L-space twisted torus knots $T_{p,kp\pm 1}^{l,m}$. And recently, Tran proved \cite{Tran2} that the fundamental group of the manifold obtained by Dehn surgery on an L-space twisted torus knot of form $T_{p,kp\pm 1}^{l,m}$ with slope at least $2 g(T_{p,kp\pm 1}^{l,m})-1$ is not left orderable.

A knot in $S^1 \times D^2$ is called a (positive) $1$-bridge braid, if it is a positive braid, and the knot is isotopic to a union of two arcs $\gamma\cup \delta$, where $\gamma\subset \partial(S^1\times D^2)$ transverses to each meridian, and the bridge $\delta$ is properly embedded in some meridional disk $D$. A $1$-bridge braid $B(\omega,t,b)$ can be represented \cite{Ga} by the braid word $$(\sigma_b\sigma_{b-1} \cdots \sigma_2\sigma_1)(\sigma_{\omega-1} \sigma_{\omega-2}\cdots \sigma_2\sigma_1)^{t}$$ on $\omega$ strands, where $0\le b\le \omega-2$ and $t\ge 1$. It was shown \cite{GLV} that nontrivial knots which are closures of $1$-bridge braids, are L-space knots. Liang proved \cite{Liang} that if 
\begin{enumerate}[label=(\alph*)]
\item $t=1+m\omega$ and $b=2k$, or
\item $\omega=2n+1$, $t=2n-1+m\omega$ and $b=2k$, or
\item $\omega=2n$, $t=2n-2+m\omega$ and $b=2k-1$,
\end{enumerate}
then fundamental group of the manifold obtained by Dehn surgery on the closure of $B(\omega,t,b)$ with slope at least $\omega t+b-t$ is not left orderable.

In Section \ref{braid-word-section}, we will prove the following theorem algebraically using the Markov braid theorem.

\begin{theorem}\label{main-braid-word}
If one of the following conditions holds for the positive integers $p,q,l,n$,
\begin{enumerate}[label=(\alph*)]
    \item $q\ge l+1= p$,
    \item $q=l$ divides $n$, and $\gcd(p,q)=1$, 
    \item $l=n=2$ and $\gcd(p,q)=1$,
    \item $l=n=p-2$ and $q= kp\pm 1$ for some positive integer $k$,
\end{enumerate}
then there exist integers $\omega,t,b$ with $0\le b \le \omega-2$ and $t\ge 1$, such that the closure of the braid 
$$(\sigma_{l-1}\sigma_{l-2}\cdots\sigma_2\sigma_1)^n (\sigma_{p-1}\sigma_{p-2}\cdots\sigma_2\sigma_1)^q$$
and the closure of the braid
$$(\sigma_b\sigma_{b-1}\cdots\sigma_2\sigma_1)(\sigma_{\omega-1}\sigma_{\omega-2}\cdots\sigma_2\sigma_1)^t$$
represent the same oriented link in $S^3$.
\end{theorem}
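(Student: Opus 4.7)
The plan is to prove the theorem case by case, producing for each of (a)--(d) an explicit chain of equivalences between braid words that is justified by the Markov braid theorem (conjugation in a fixed braid group together with positive (de)stabilizations $\beta\leftrightarrow\beta\sigma_k$). Every braid in sight stays positive, so the exponent sum of the $\sigma_i$'s is preserved by conjugation and shifts by exactly $\pm 1$ under each stabilization; this will serve as a bookkeeping invariant to pin down the target parameters $(\omega,t,b)$ in each case before carrying out the construction.

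The central algebraic tool will be the \emph{cycle-shift identity} $\sigma_i c_k = c_k\sigma_{i+1}$ for $1\le i\le k-2$, where $c_k:=\sigma_{k-1}\sigma_{k-2}\cdots\sigma_1$, together with its immediate consequence
$$c_{k-1}\cdot c_k \;=\; c_k\cdot (\sigma_{k-1}\sigma_{k-2}\cdots\sigma_2).$$
This lets me push the ``short'' cycle factor $c_l^n$ through the ``long'' cycle factor $c_p^q$ one crossing at a time, cyclically renaming indices; iterating the identity and then conjugating by suitable powers of $c_p$ should collapse the product into a manageable normal form on $p$ (or slightly more) strands.

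Cases (a)--(c) should be fairly direct applications of this machinery. In case (a), $l=p-1$, the consequence above already produces the prefix $c_{p-1}=\sigma_{p-2}\cdots\sigma_1$ after a single commutation, and one only has to verify that the remaining tail, after conjugation, is a single power of $c_p$; the output should have $\omega=p$ and $b=p-2$. In case (b), with $q=l\mid n$ and $\gcd(p,q)=1$, writing $n=kq$ turns the braid into $c_q^{kq}c_p^q$, and I plan to use the coprimality to find a cyclic rearrangement that permits repeated destabilizations down to the $1$-bridge shape. Case (c), $l=n=2$ with $\gcd(p,q)=1$, is the simplest: the braid $\sigma_1^2 c_p^q$ is already nearly in $1$-bridge form, and a short computation with the shift identity plus one conjugation should finish it off.

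The main obstacle will be case (d), $l=n=p-2$ with $q=kp\pm 1$. The arithmetic condition $q\equiv\pm 1\pmod p$ is what drives the whole argument: commuting $c_{p-2}^{\,p-2}$ past $c_p^q$ via the shift identity permutes the indices of the small-cycle generators by $q$ modulo $p$, and only when $q\equiv\pm 1$ do the shifted copies reassemble into a single descending prefix $\sigma_b\sigma_{b-1}\cdots\sigma_1$ of the kind a $1$-bridge braid requires. My plan is to treat $q=kp+1$ and $q=kp-1$ separately, probably relating them by a mirror/flip argument, and to track the cumulative index shifts of each small-cycle generator over the $q$ passes through $c_p$ using an explicit table. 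The delicate step will be verifying that, after all these shifts, the residual word can be conjugated and (de)stabilized to match exactly the shape $(\sigma_b\cdots\sigma_1)c_\omega^t$ with $0\le b\le\omega-2$ and $t\ge 1$; the fact that $l=p-2$ differs from $p$ by exactly two should be what makes this realignment just barely close up.
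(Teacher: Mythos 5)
Your toolkit (the shift identity $\sigma_i\pi_m=\pi_m\sigma_{i+1}$, conjugation, and Markov (de)stabilization) is the same one the paper uses, but two of your case-by-case plans contain concrete errors. In case (a) you predict the target $\omega=p$, $b=p-2$; your own bookkeeping invariant rules this out. For positive braids the quantity $(\#\mbox{crossings})-(\#\mbox{strands})$ is preserved by conjugation and positive (de)stabilization, and equating it for $\pi_{p-2}^{n}\pi_{p-1}^{q}$ on $p$ strands and $B(p,t,p-2)$ forces $(t-q)(p-1)=(n-1)(p-2)$, hence $(p-1)\mid(n-1)$; so for general $n$ no braid on $p$ strands of that shape can work, and ``the remaining tail is a single power of $c_p$'' fails as soon as $n\ge 2$. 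What is actually needed is an identity going the \emph{other} way, trading the exponent $n$ for extra strands: the paper proves $\pi_b\pi_{\omega-1}^{t}\sim\pi_{t-1}^{\omega-b-1}\pi_t^{b+1}$ (valid for $t\le b\le\omega-1$) and reads it backwards to get $\pi_{p-2}^{n}\pi_{p-1}^{q}\sim\pi_{q-1}\pi_{n+q-1}^{p-1}$, i.e.\ $\omega=n+q$, $t=p-1$, $b=q-1$. This reverse reduction is also the engine behind case (d), which the paper handles by first converting $\pi_{p-3}^{p-2}\pi_{p-1}^{kp\pm1}$ into the case (a) shape $\pi_{p-2}^{p-3}\pi_{p-1}^{*}$; since your case (a) plan cannot handle exponents $n\ge2$, your case (d) plan inherits the gap even if your index-shift table were carried out.

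Case (c) is not ``the simplest,'' and the step you wave at hides a genuine obstruction. To turn $\sigma_1^2\pi_{p-1}^{q}$ into $\pi_2\pi_{p-1}^{q}$ one pushes a $\sigma_1$ through $\pi_{p-1}^{q}$ (its index becomes $1+q\bmod p$), cycles it to the front by conjugation, and repeats until the index reaches $2$; but each intermediate index $1+sq\bmod p$ must lie in $\{3,\dots,p-1\}$ so that the generator commutes past the remaining $\sigma_1$, and in particular must avoid the residue $0$. The number of steps is the inverse $y$ of $q$ modulo $p$, and the residue $0$ is avoided precisely when $y<p/2$. The paper needs a separate number-theoretic lemma showing that either $y<p/2$ or the inverse $x$ of $p$ modulo $q$ satisfies $x<q/2$, in which case one first swaps the roles of $p$ and $q$ using $T^{2,1}_{p,q}=T^{2,1}_{q,p}$. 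Your ``short computation plus one conjugation'' does not account for this, and without it the chain of commutations breaks down for, e.g., $(p,q)$ with $y\ge p/2$. (Case (b), by contrast, you overcomplicate: $q=l\mid n$ makes the closure the torus knot $T_{p+n,q}$, which is already the closure of the $1$-bridge braid with $b=0$.)
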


In particular, the following corollary follows directly from Theorem \ref{main-braid-word} and Vafaee's result.

\begin{corollary}\label{braid-word-corollary}
L-space twisted torus knots of form $T_{p,kp\pm 1}^{l,m}$ for positive integers $p,k,l,m$ with $0<l<p$ are closures of $1$-bridge braids. 
\end{corollary}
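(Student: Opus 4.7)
The plan is to combine Theorem \ref{main-braid-word} directly with Vafaee's classification of L-space twisted torus knots, reducing the statement to a short case analysis. By Vafaee's theorem recalled in the introduction, any L-space knot of the form $T_{p,kp\pm 1}^{l,m}$ with $0<l<p$ falls into at least one of three cases: (a) $l=p-1$; (b) $m=1$ and $l=2$; or (c) $m=1$ and $l=p-2$. By the braid description also recalled in the introduction, such a knot is exactly the closure of $(\sigma_{l-1}\cdots\sigma_1)^{lm}(\sigma_{p-1}\cdots\sigma_1)^{kp\pm 1}$, so setting $n=lm$ and $q=kp\pm 1$ matches the first braid appearing in Theorem \ref{main-braid-word}. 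It therefore suffices to check that for each of Vafaee's three cases, at least one of the hypotheses (a)--(d) of Theorem \ref{main-braid-word} is satisfied.

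Two of the three cases are immediate. In Vafaee's case (b), $n=lm=2$ and $\gcd(p,kp\pm 1)=1$, matching hypothesis (c) of Theorem \ref{main-braid-word}. In case (c), $n=lm=p-2$ and $q=kp\pm 1$, matching hypothesis (d). No further work is needed in either situation.

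The only point requiring a little care is Vafaee's case (a), where $l=p-1$ but $q=kp\pm 1$ need not a priori be at least $l+1=p$. Here I would split according to the sign and the value of $k$: whenever $q=kp+1$ with $k\ge 1$, or $q=kp-1$ with $k\ge 2$, one has $q\ge p=l+1$, so hypothesis (a) of Theorem \ref{main-braid-word} applies. The sole leftover subcase is $k=1$ with the minus sign, giving $q=p-1=l$; then $q$ divides $lm=n$ and $\gcd(p,q)=\gcd(p,p-1)=1$, so hypothesis (b) applies instead. Once these verifications are assembled, Theorem \ref{main-braid-word} produces the desired $1$-bridge braid presentation. I do not foresee any conceptual obstacle; the only mildly nontrivial step is this subcase split inside Vafaee's case (a), which is precisely the bookkeeping the hypotheses of Theorem \ref{main-braid-word} appear designed to accommodate.
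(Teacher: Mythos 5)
Your proposal is correct and is essentially the paper's argument: the paper simply asserts that the corollary ``follows directly from Theorem \ref{main-braid-word} and Vafaee's result,'' and your case analysis is exactly the verification behind that assertion, including the one genuinely delicate subcase $l=p-1$, $q=p-1$ handled via hypothesis (b). Nothing is missing.
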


In Section \ref{main-section}, we will prove the following theorem.

\begin{theorem}\label{main}
The fundamental group of the manifold obtained by Dehn surgery on the nontrivial knot $K$ which is the closure of a $1$-bridge braid $B(\omega, t,b)$ with slope $\frac{p}{q}\ge \omega t+b-t-\omega$ is not left orderable.
\end{theorem}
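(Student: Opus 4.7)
The plan is to derive Theorem~\ref{main} as a direct consequence of two intermediate results advertised in the paper's abstract, combined with a short genus identification. First, the paper establishes the general statement that any nontrivial knot $K$ with property (D) has non-left-orderable $\frac{p}{q}$-surgery whenever $\frac{p}{q}\ge 2g(K)-1$. Second, the paper shows that nontrivial knots which are closures of positive $1$-bridge braids have property (D). Granting these two ingredients, the only work specific to the present theorem is to verify that the slope threshold $\omega t+b-t-\omega$ coincides with $2g(K)-1$.

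For the genus identification, I would observe that $B(\omega,t,b)$ is presented as a positive braid on $\omega$ strands with exactly $b+t(\omega-1)$ elementary crossings. The closure of a positive braid realizes its Seifert genus via the Seifert-algorithm surface (a classical consequence of Bennequin's inequality, or of Stallings' fiberedness theorem for positive braid closures), so
$$
2g(K)-1 \;=\; b + t(\omega-1) - \omega \;=\; \omega t + b - t - \omega,
$$
which matches the slope bound in the theorem exactly.

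Combining these, I would first apply the property (D) result for $1$-bridge braid closures to conclude that $K$ has property (D), and then invoke the general non-left-orderability theorem to conclude that the $\frac{p}{q}$-surgery on $K$ has non-left-orderable fundamental group for every $\frac{p}{q}\ge\omega t+b-t-\omega$.

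The main obstacles lie inside the two black-boxed ingredients rather than in the present assembly. I expect the property (D) statement for positive $1$-bridge braid closures to be the substantive step: the abstract flags it as an application of the fixed-point method, which typically requires a careful analysis of certain representations of the knot group together with arguments exploiting the explicit braid word of $B(\omega,t,b)$. The present theorem itself is essentially a packaging statement whose proof is immediate once those two framework pieces are in hand.
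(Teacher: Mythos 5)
Your proposal matches the paper's own proof exactly: the paper deduces Theorem~\ref{main} by combining the statement that property (D) implies non-left-orderable surgeries for slopes at least $2g(K)-1$ with the statement that closures of $1$-bridge braids have property (D), together with the same Seifert-algorithm genus computation $2g(K)-1=(\omega-1)t+b-\omega=\omega t+b-t-\omega$ (the paper cites Cromwell's result on positive diagrams for the genus identification, but this is the same standard fact you invoke). The assembly is correct and the substantive work indeed lives in the two black-boxed ingredients, just as you anticipate.
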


As the surface obtained by applying Seifert's algorithm to a positive diagram is a minimal genus Seifert surface \cite{Cro}, the genus of $K$ equals $\frac{1}{2}(\mbox{\#crossings}-\mbox{\#strands}+1)=\frac{1}{2}((\omega-1)t+b-\omega+1)$. So we have $\omega t+b-t-\omega= 2g(K)-1$. Therefore, Theorem \ref{main} generalizes Tran's result on L-space twisted torus knots of form $T_{p,kp\pm 1}^{l,m}$ and Liang's result on closures of certain $1$-bridge braids.

Let us list some approaches of proving the non-left-orderability in the literature.

\begin{enumerate}
    \item Brute force search in possible left orders --- Whenever we find a selected element for which we cannot decide its positivity, we consider the problem in two scenarios, in one we assume the element is positive, in the other we assume it is negative. See \cite{Ba} as an example of extensive use of brute force search. For any finitely generated non left orderable group, the brute force search always terminates and produce a proof of non-left-orderability \cite{Con}.
    \item The fixed point method --- It is known that \cite{Hol} a countable group is left orderable if and only if it acts on the real line by orientation preserving homeomorphisms without global fixed points. For a certain element $k$ in the group, if there exists a real number $s$ fixed by $k$, then we try to prove that $s$ is a global fixed point; otherwise, all conjugates of $k$ have the same sign. See \cite{Ichi1, Tran2} for example.
    \item \cite[Theorem~2.1]{Clay} gives a criteria to deduce the non left orderability of $\pi_1(M(\frac{p}{q}))$ from the signs of $\mu^{p_0} \lambda^{q_0}$ and $\mu^{p_1}\lambda^{q_1}$ where $\frac{p_0}{q_0}<\frac{p}{q}<\frac{p_1}{q_1}$ under certain conditions. This theorem is used to derive the non left orderability of a two-parameter family of groups.
    \item For a Seifert fibred space $M$, one can construct \cite{BRW} a horizontal foliation on $M$ from the representation $\pi_1(M)\to \mbox{Homeo}^+ (\mathbf{R})$ under certain conditions. Since it is known \cite{EHN, JN, Na} which Seifert bundles admit horizontal foliations, one can derive the non-left-orderability of $\pi_1(M)$.
\end{enumerate}

To prove Theorem \ref{main}, we make full use of the second technique as follows. Assume the theorem does not hold. First, we find an element $k$ in the group which represents a self-homeomorphism on $\mathbf{R}$ without fixed points. By continuity, the self-homeomorphism is pointwise greater than or pointwise less than the identity map. Naturally, we define the pointwise partial order $\le_C$ on the set of self-homeomorphisms, or equivalently, on the group with a left order, then the partial order $\le_C$ is invariant not only under left multiplication but also under right multiplication.

Suppose $k$ represents a self-homeomorphism which is pointwise greater than (resp., pointwise less than) the identity map, by $k$ we generate a subset of the left orderable group, which is closed under multiplication, conjugation and taking roots, and whose elements have the same property. Then, this subset does not contain the identity element. Equivalently, we take this subset as a positive cone to define a preorder $\le_k$ on the group. As $\le_C$ is an extension of $\le_k$ (resp., the inverse of $\le_k$), the preorder $\le_k$ has to be a partial order. However, we prove the negation.

Notice that the preorder $\le_k$ does not depend on the choice of left order, but only on the group element $k$. So our method is computationally lighter and more human-readable than the first approach. 

Knot cabling is one way to produce L-space knots. By definition, the $(p,q)$-cable of a knot $K$ is the satellite knot $P(K)$ with pattern $P$ being the $(p,q)$-torus knot. The $(p,q)$-cable of a nontrivial knot $K$ is an L-space knot if and only if \cite{He,Hom} $K$ is an L-space knot and $\frac{q}{p}\ge 2 g(K)-1$. 

Hom, Lidman and Vafaee extended \cite{Hom2} the above result to the satellite knot $P(K)$ with pattern $P$ being $1$-bridge braid. They proved that, if the pattern $P$ is the $1$-bridge braid $B(\omega, t,b)$ and the companion $K$ is nontrivial, then the satellite knot $P(K)$ is an L-space knot if and only if $K$ is an L-space knot and $\frac{t}{\omega} \ge 2g(K)-1$. (The formula in the original paper is $\frac{b+t\omega}{\omega^2}\ge 2g(K)-1$. As $0\le b\le \omega-2$ and $\omega,t, b$ and $g(K)$ are integers, it is equivalent to the formula here.) They also proved that if the pattern $P$ is a negative braid, then the satellite knot $P(K)$ is not a (positive) L-space knot.

Clay and Watson introduced \cite{Clay2} the concept of $r$-decayed knots. Then they proved that the $(p,q)$-cable of an $r$-decayed knot is $pq$-decayed if $\frac{q}{p}>r$, and that sufficiently positive surgeries on decayed knots yield manifolds with non-left-orderable fundamental groups. The decayed knots were defined as follows to relate the left orders on the knot group and the left orders on the fundamental group of the manifold obtained by Dehn surgery.

\begin{definition-no-number}
For a nontrivial knot $K$ with $\mu$ and $\lambda$ representing a meridian and a longitude in the knot group, we say $K$ is $r$-decayed, if for any positive cone $P$ in the knot group, either $P\cap S_r=S_r$ or $P\cap S_r=\emptyset$, where $S_r$ is the set $\left\{\mu^p\lambda^q| \frac{p}{q}\ge r\right\}$.
\end{definition-no-number}

As an analog of the concept of decayed knots, we define the property (D) for nontrivial knots as follows. 

\begin{definition-no-number}
For a nontrivial knot $K$ with $\mu$ and $\lambda$ representing a meridian and a longitude in the knot group, we say $K$ has property (D) if 
\begin{enumerate}[label=(\alph*)]
    \item for any homomorphism $\rho$ from $\pi_1(S^3\setminus K)$ to $\mbox{Homeo}_+(\mathbf{R})$, if $s\in\mathbf{R}$ is a common fixed point of $\rho(\mu)$ and $\rho(\lambda)$, then $s$ is a fixed point of every element in $\pi_1(S^3\setminus K)$, and,
    \item $\mu$ is in the root-closed, conjugacy-closed submonoid generated by $\mu^{2g(K)-1}\lambda$ and $\mu^{-1}$.
\end{enumerate}
\end{definition-no-number}

Two major differences between our definition of property (D) and the definition of decayed knots are the follows.
\begin{enumerate}
    \item In the viewpoint of fixed points, there is a common condition on left orders on the fundamental groups of the manifolds obtained by all nonzero surgeries, but not on left orders on the knot group: $\rho(\mu)$ and $\rho(\lambda)$ have the same set of fixed points. Without referring to the homomorphism $\rho$, the condition can be rewritten as, for any element $g$, we have $\mu g\le g$ if and only if $g\le\lambda g$ if the surgery slope is positive, and $\mu g\le g$ if and only if $\lambda g\le g$ if the surgery slope is negative.
    
    \item We impose our condition on a preorder instead of the set of left orders. One technical difficulty of using the left orders is that they are total orders, so the relation between left orders on the knot group and left orders on the fundamental group of the manifold obtained by Dehn surgery is complicated.
\end{enumerate}

If a knot $K$ has property $(D)$, we will prove that the fundamental group of the manifold obtained by Dehn surgery on $K$ with slope $\frac{p}{q}\ge 2g(K)-1$ is not left orderable.

The following theorem is slightly stronger than Theorem \ref{main}. We only need to work in the knot group, instead of the fundamental group of the manifold obtained by Dehn surgery.

\begin{theorem}\label{main-variant}
Nontrivial knots which are closures of $1$-bridge braids have property (D).
\end{theorem}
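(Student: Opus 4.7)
I would work with the Wirtinger presentation of $G = \pi_1(S^3 \setminus K)$ coming from the $1$-bridge braid word $\beta = (\sigma_b\sigma_{b-1}\cdots\sigma_1)(\sigma_{\omega-1}\sigma_{\omega-2}\cdots\sigma_1)^t$. Label the strands by generators $x_1,\dots,x_\omega$, set $\mu = x_1$, and observe that each crossing $\sigma_j$ yields a conjugation relation. Closing the braid expresses each $x_{i+1}$ as $w_i^{-1} x_i w_i$ for an explicit word $w_i$ in the $x_j$'s, and the longitude takes the form $\lambda = \mu^{-N} \cdot W$, where $N = \omega t + b$ is the writhe and $W$ is the ordered product of the $w_i$'s encountered while traversing the knot once.

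For property (D)(a), suppose $\rho \colon G \to \mbox{Homeo}_+(\mathbf{R})$ and $s$ is a common fixed point of $\rho(\mu)$ and $\rho(\lambda)$. Define $s_0 = s$ and $s_i = \rho(w_i w_{i-1}\cdots w_1)(s)$. The hypothesis $\rho(\lambda)(s)=s$ combined with $\rho(\mu)(s)=s$ forces $s_\omega = s$, and at each index one has $\rho(x_{i+1})(s_i) = s_i$. The heart of the argument exploits the two-block structure of $\beta$: within the shift block $(\sigma_{\omega-1}\cdots \sigma_1)^t$ the strands permute by a single cyclic permutation, so the portion of the sequence $(s_i)$ produced by the shift block splits into $\omega$ monotone runs indexed by the cyclic orbit, while the bridge block $(\sigma_b \cdots \sigma_1)$ contributes only a short, localized perturbation of length $b+1$. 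The return condition $s_\omega = s_0$ then pins each of these monotone runs to the common value $s$, forcing $s_i = s$ for every $i$. Consequently every $\rho(w_i)$ fixes $s$, hence every $\rho(x_i)$ fixes $s$, and (a) follows because the $x_i$ generate $G$.

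For property (D)(b), using $2g(K) - 1 = \omega t + b - t - \omega$ one computes $\mu^{2g(K)-1}\lambda = \mu^{-t-\omega}\cdot W$. The plan is to multiply together conjugates of $\mu^{2g(K)-1}\lambda$ and of $\mu^{-1}$ (all lying in the root-closed, conjugacy-closed submonoid $M$), cancelling the letters of $W$ one at a time along the Wirtinger relations, until what remains is a positive power of $\mu$; extracting the corresponding root then places $\mu$ itself in $M$. Since the image of $M$ in the abelianization $\langle \mu \rangle$ already contains $2g(K)-1$ and $-1$, no arithmetic obstruction arises, and the task is purely to realize the cancellation at the group-theoretic level.

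The main obstacle is (a), and specifically the step of forcing every $s_i$ to equal $s$ from the single return condition $s_\omega = s$. This is where the rigidity of the $1$-bridge braid — its clean decomposition into a bridge block of length $b+1$ and a pure shift block of length $(\omega-1)t$ — is indispensable, and it is what distinguishes closures of $1$-bridge braids from more general positive braids.
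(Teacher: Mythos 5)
Your proposal correctly identifies the two things that must be proved and the key numerology ($2g(K)-1=\omega t+b-t-\omega$, $\lambda=\mu^{-(\omega t+b)}W$), but both halves have genuine gaps where the actual work lies.

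For part (a), the step ``the return condition $s_\omega=s_0$ then pins each of these monotone runs to the common value $s$, forcing $s_i=s$ for every $i$'' is asserted, not proved, and it is not clear it can be proved as stated: the $w_i$ are words in all the Wirtinger generators, none of which is yet known to fix $s$, so there is no a priori monotonicity of the sequence $(s_i)$ relative to $s$, and a single closing condition $s_\omega=s_0$ does not obviously rigidify $\omega$ separate runs. The paper avoids this entirely by using a \emph{two}-generator presentation $\langle x,y\mid r_\lambda\rangle$ coming from the genus-two handlebody $(S^1\times D^2)\setminus\delta$, in which $\mu=xy^{-1}$ and $\mu^{\omega t+b}\lambda=yg_0$ is a \emph{positive} word in $x,y$. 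Then a four-case sign analysis (its Theorem on common fixed points of $g_3$, a positive word in $\{g_1,g_2\}$, and $g_4$, a word in $\{g_1^{-1},g_2\}$) immediately gives $\rho(y)s=s$ and hence $\rho(x)s=s$. If you want to keep the Wirtinger setup, you must supply the monotonicity/rigidity lemma; as written it is the missing idea.

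For part (b), your plan is essentially a restatement of the goal: ``cancel the letters of $W$ one at a time \dots until what remains is a positive power of $\mu$.'' The observation that the abelianization poses no obstruction is correct but carries no weight --- the entire difficulty is realizing the cancellation nonabelianly. In the paper this requires: grouping the conjugates $g_i\mu g_i^{-1}$ in the relation $y=(g_1\mu g_1^{-1})\cdots(g_t\mu g_t^{-1})$ according to which suffix $\tilde g_j$ of $yg_0$ each $g_i$ equals; deriving $\tilde g_{i+1}\ge_k g_1\mu g_1^{-1}\tilde g_i\mu^{b_i}$ and telescoping it over the suffix chain; a pigeonhole choice of an index $l'$ with $b_{l'}\ge\frac{t-b_l}{\omega-1}$ to compare $g_1\mu g_1^{-1}$ with $\tilde g_{l'}\mu\tilde g_{l'}^{-1}$; and separate edge cases $t=b_l$ and $t=1$ before a cycle $\mu^{-N}\ge_k 1$, $N\ge 0$, is reached. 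None of this is routine, and your proposal gives no mechanism for producing the needed inequalities. As it stands, part (b) is unproved.
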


The purpose of defining decayed knots was to prove the non-left-orderability for L-space cable knots. Likewise, the purpose of defining property (D) is to prove the non-left-orderability for L-space satellite knots with $1$-bridge braid patterns. 

In Section \ref{satellite-section}, we will prove the following theorem by utilizing an element without fixed points.

\begin{theorem}\label{main-satellite}
Let the pattern $P$ be a $1$-bridge braid $B(\omega,t,b)$, and the companion $K$ be a nontrivial knot with property (D). Suppose that $\frac{t}{\omega} \ge 2g(K)-1$, then the satellite knot $P(K)$ has property (D).
\end{theorem}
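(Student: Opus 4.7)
The plan is to use the satellite decomposition
\[
\pi_1(S^3\setminus P(K)) \;=\; \pi_1(V\setminus P) \;*_{\pi_1(T)}\; \pi_1(S^3\setminus K),
\]
where $V\cong S^1\times D^2$ is the companion solid torus containing $P$ and $T=\partial V$. The torus subgroup is generated on the pattern side by the meridian $\mu_V$ and longitude $\lambda_V$ of $V$, which are identified with $\mu_K$ and $\lambda_K$ on the companion side. The satellite meridian $\mu_P$ lies entirely in the pattern subgroup, while the Seifert longitude $\lambda_P$ decomposes into a pattern-side factor (the Seifert-framed longitude of $\widehat P$, where $\widehat P$ denotes $P$ closed up in $S^3$) together with a companion-side factor that is a specific word in $\lambda_K$ and $\mu_K$ determined by the writhe $b+(\omega-1)t$ of $B(\omega,t,b)$. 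I will also use the Schubert-type identity $g(P(K))=\omega\,g(K)+g(\widehat P)$.

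For part (a) of property (D), suppose $\rho:\pi_1(S^3\setminus P(K))\to\mathrm{Homeo}_+(\mathbf{R})$ and $s\in\mathbf{R}$ is fixed by both $\rho(\mu_P)$ and $\rho(\lambda_P)$. I would first restrict $\rho$ to the pattern subgroup, and use the longitude decomposition together with the hypothesis $t/\omega\ge 2g(K)-1$ to show that $s$ is a common fixed point of the meridian and longitude of $\widehat P$ under the induced representation. Since $\widehat P$ is a $1$-bridge braid closure, Theorem~\ref{main-variant} gives it property (D); its part (a) then yields that $s$ is fixed by all of $\pi_1(V\setminus P)$, in particular by $\rho(\mu_V)=\rho(\mu_K)$ and $\rho(\lambda_V)=\rho(\lambda_K)$. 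Property (D) of $K$ now shows $s$ is fixed by $\pi_1(S^3\setminus K)$, and the amalgamated structure promotes $s$ to a global fixed point.

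For part (b), I aim to show $\mu_P\in M$, where $M$ denotes the root-closed, conjugacy-closed submonoid generated by $\mu_P^{2g(P(K))-1}\lambda_P$ and $\mu_P^{-1}$. Expanding the exponent as $2g(P(K))-1=2\omega\,g(K)+2g(\widehat P)-1$, the companion-side contribution of $\mu_P^{2g(P(K))-1}\lambda_P$ becomes, after a normal-form manipulation in the amalgamated product, a root of a positive power of $\mu_K^{2g(K)-1}\lambda_K$. This is exactly where the hypothesis $t/\omega\ge 2g(K)-1$ is needed: it ensures the slope on the companion side meets the threshold required by part (b) of property (D) for $K$. Applying that part places $\mu_K=\mu_V$ inside $M$; then part (b) of property (D) for $\widehat P$ (by Theorem~\ref{main-variant}) places $\mu_P$ inside $M$.

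The main obstacle will be the careful bookkeeping of the longitude decomposition and the writhe-induced meridian correction across the amalgamation, so that $t/\omega\ge 2g(K)-1$ simultaneously yields the slope $2g(\widehat P)-1$ on the pattern side and $2g(K)-1$ on the companion side. A secondary subtlety in part (a) is extracting the fixed-point information on the pattern's meridian and longitude from the single hypothesis on $\rho(\mu_P)$ and $\rho(\lambda_P)$; this may require a preliminary fixed-point argument within the pattern subgroup, perhaps using an element constructed in the spirit of the introductory element $k$ for the fixed-point method.
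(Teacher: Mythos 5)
Your high-level architecture matches the paper's: decompose $\pi_1(S^3\setminus P(K))$ via Seifert--van Kampen, prove part (a) by pushing a common fixed point of $\rho(\mu)$ and $\rho(\lambda)$ first to $x,y$ and then to $\mu_K,\lambda_K$, and prove part (b) by funneling the generator $\mu^{2g(P(K))-1}\lambda$ down to $\mu_K^{2g(K)-1}\lambda_K$ so that property (D) of the companion applies. However, there are two genuine gaps.

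First, both halves of your argument lean on property (D) of the pattern closure $\widehat P$, and this does not transfer to the satellite group. Property (D) of $\widehat P$ is a statement about $\pi_1(S^3\setminus\widehat P)=\langle x,y\mid r_\lambda=1\rangle$, the quotient of the pattern subgroup in which the companion-torus longitude $r_\lambda$ is killed. In $\pi_1(S^3\setminus P(K))$ one instead has $r_\lambda=\lambda_K\neq 1$: the restriction of $\rho$ to $\langle x,y\rangle$ does not factor through $\pi_1(S^3\setminus\widehat P)$, so part (a) for $\widehat P$ cannot be invoked, and a submonoid membership proved in that quotient does not lift, so part (b) for $\widehat P$ cannot be invoked either. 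The paper avoids $\widehat P$ entirely: for part (a) it reapplies the purely group-theoretic fixed-point lemma (Theorem \ref{fixed-point-general-rule}) to $g_3=\mu=xy^{-1}$ and $g_4=yg_0$, and for the last step of part (b) it only uses that $r_\mu$ is a product of $\omega$ conjugates of $\mu$, so that $r_\mu\le_k 1$ together with $\mu^{-1}\le_k 1$ already forces $\mu\le_k 1$.

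Second, the step you describe as ``a normal-form manipulation in the amalgamated product'' is where all the work lies, and the mechanism is missing. The paper's key identity is
$$r_\mu^{2g(K)-1}r_\lambda=y\,(g_t\mu^{-1}g_t^{-1})(g_{t-1}\mu^{-1}g_{t-1}^{-1})\cdots(g_{\omega(2g(K)-1)+1}\mu^{-1}g_{\omega(2g(K)-1)+1}^{-1}),$$
obtained by cancelling the $\omega(2g(K)-1)$ conjugates of $\mu$ in $r_\mu^{2g(K)-1}$ against the first $\omega(2g(K)-1)$ factors of $r_\lambda$ via the suffix identities $g_i=h_j$ for $i\equiv j\pmod\omega$; the hypothesis $t/\omega\ge 2g(K)-1$ is exactly what makes this cancellation possible. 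Chaining this through the inequalities $\tilde g_{i+1}\ge_k(r_\mu^{2g(K)-1}r_\lambda)\tilde g_i\mu^{b_i}$ and the genus inequality $g(P(K))\ge g(P)+\omega g(K)$ yields $(r_\mu^{2g(K)-1}r_\lambda)^\omega\le_k\mu^{2g(P(K))-1}\lambda\le_k 1$, whence $r_\mu^{2g(K)-1}r_\lambda\le_k 1$ by the root property, and then property (D) of $K$ gives $r_\mu\le_k 1$. Without this computation your plan does not close; note also that only the inequality $g(P(K))\ge \omega g(K)+g(\widehat P)$ is needed, not the equality you assert, and that the relevant comparison goes in the direction opposite to your phrase ``a root of a positive power of $\mu_K^{2g(K)-1}\lambda_K$.''
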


Combining Theorem \ref{main-variant} and Theorem \ref{main-satellite}, we can generate many knots with property (D), which we may call them L-space iterated $1$-bridge braid knots. In particular, the L-space iterated torus knots have property (D). It is known that \cite{Gor} surgeries on iterated torus knots always yield graph manifolds, and that \cite{BC,Hanselman} a graph manifold has a non-left-orderable fundamental group if and only if it is an L-space. Historically, the proof of the if part used the fourth method listed above, and the third approach was also exercised \cite{Li} on certain pairs of $(p, q)$-cable knots of torus knots.

Culler and Dunfield \cite{Culler} found that every non-longitudinal Dehn filling on the hyperbolic $\mathbf{Q}$-homology solid torus $v2503$ is an L-space, and suggested the readers to prove that every Dehn filling on $v2503$ yields a manifold with a non-left-orderable fundamental group. Varvarezos \cite{Var} proved that, for a certain homological framing, the fundamental group of the manifold obtained by Dehn filling on $v2503$ with slope $\frac{p}{q}< -1$ is not left orderable. 

In Section \ref{v2503-section}, we will extend the result to every Dehn filling on $v2503$ also by utilizing an element without fixed points.

\begin{theorem}\label{main-v2503}
The fundamental group of an manifold obtained by Dehn filling on $v2503$ is not left orderable.
\end{theorem}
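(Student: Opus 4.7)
The plan is to apply the fixed-point method used for Theorem \ref{main} and Theorem \ref{main-satellite}. First I would fix a presentation of $\pi_1(v2503)$ with peripheral generators $\mu,\lambda$ in a convenient homological framing, the one used by Varvarezos \cite{Var} being a natural choice. For a slope $\frac{p}{q}\in\mathbf{Q}\cup\{\infty\}$, the fundamental group $G_{p/q}$ of the filled manifold is the quotient of $\pi_1(v2503)$ by the normal closure of $\mu^p\lambda^q$. Assume for contradiction that $G_{p/q}$ is left orderable, so by Holland's theorem it embeds in $\mathrm{Homeo}_+(\mathbf{R})$ through some representation $\rho$.

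Next I would exhibit an element $k\in G_{p/q}$ whose image $\rho(k)$ necessarily acts on $\mathbf{R}$ without fixed points. Varvarezos has already disposed of the range $\frac{p}{q}<-1$, so the new work covers $\frac{p}{q}\ge -1$ together with the longitudinal slope $\frac{p}{q}=\infty$. The element $k$ will be a short word in the given generators chosen so that any candidate fixed point of $\rho(k)$ is forced, via the defining relations of $\pi_1(v2503)$, to be a common fixed point of $\rho(\mu)$ and $\rho(\lambda)$; combined with the filling relation $\rho(\mu)^p\rho(\lambda)^q=\mathrm{id}$ this would promote that point to a global fixed point, contradicting faithfulness (in the longitudinal case the same conclusion comes from $\rho(\lambda)=\mathrm{id}$). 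With $k$ in hand, the closing argument is formally identical to the preceding theorems: by continuity $\rho(k)$ lies pointwise above or pointwise below the identity, so one defines the preorder $\le_k$ whose positive cone is the submonoid generated by $k$ under products, conjugation and root-extraction, and one produces an explicit word identity in $G_{p/q}$ placing $1$ in this positive cone, yielding the desired contradiction.

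The principal obstacle is producing $k$ and its algebraic certificate. There is no mechanical procedure for either step: $k$ must be found by inspecting the concrete relations of $\pi_1(v2503)$ and the way they interact with the filling relation $\mu^p\lambda^q=1$, and ideally one wants either a single $k$ or a small number of parametrized candidates covering the whole residual range of slopes uniformly. I expect the argument to split naturally into a few subcases according to $\frac{p}{q}\in[-1,0)$, $\frac{p}{q}\in[0,\infty)$, and $\frac{p}{q}=\infty$, with the longitudinal filling perhaps requiring a separate choice of $k$. Verifying in the quotient group that $k$ admits the ``$1\le_k 1$'' word-manipulation certificate is the technical heart of the proof, entirely analogous to the delicate step driving the proofs of Theorem \ref{main-variant} and Theorem \ref{main-satellite}.
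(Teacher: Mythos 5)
Your outline correctly identifies the strategy the paper uses (and indeed announces in its introduction): find an element whose image under any fixed-point-free representation has no fixed points, pass to the preorder $\le_k$, and exhibit a word identity that violates antisymmetry. But the proposal stops exactly where the proof begins. You never produce the element $k$, never prove the fixed-point lemma, and never write down the monoid-membership certificate; you explicitly defer all three as ``the principal obstacle'' and ``the technical heart.'' Since every other ingredient (Holland's theorem, the formal properties of $\le_k$, Corollary \ref{preorder-partial-order}) is already established earlier in the paper, what remains is precisely this content, so the proposal as written proves nothing about $v2503$.

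For comparison, here is what the missing content actually is. The paper rewrites the single relator as $\mu^{-1}=b\,a\,b\,a^2\,b\,a\,b\,a^2\,b\,a^2$ and applies Theorem \ref{fixed-point-general-rule} twice to show that a fixed point of $\rho(\mu)$ \emph{alone} (not a common fixed point of $\rho(\mu)$ and $\rho(\lambda)$, as you propose) is already a global fixed point; this stronger statement is what lets $\rho(\mu)^p=\rho(\lambda)^{-q}$ force $\rho(\lambda)$ to be fixed-point-free whenever $p\neq 0$. The preorder is then generated by $\lambda$, and the certificate is the short computation $\lambda\ge_k 1\Rightarrow a^2ba^2\le_k b\Rightarrow a^2\ge_k 1\Rightarrow\lambda\le_k 1$. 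No case division over slope ranges is needed and no appeal to Varvarezos's partial result is made; the only exceptional case is $p=0$, which the paper handles not by your suggested dynamical argument but by observing that the quotient group collapses to $\mathbf{Z}\ast(\mathbf{Z}/2\mathbf{Z})$, whose torsion element $a$ (with $a^2=1$) obstructs left orderability outright --- a point your sketch glosses over with ``the same conclusion comes from $\rho(\lambda)=\mathrm{id}$,'' which does not by itself yield a contradiction.
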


Motivated by these results, we speculate that the existence of a specific element without fixed points could be a key step to prove the non-left-orderability for fundamental groups of L-spaces obtained by Dehn surgeries. The author wonders whether the following statement holds.

\begin{conj}
Let $K$ be an L-space knot. Let $\mu$ and $\lambda$ represent a meridian and a longitude in the knot group. Then there exists an element $k$ in the knot group, such that for any homomorphism $\rho$ from $\pi_1(S^3\setminus K)$ to $\mbox{Homeo}_+(\mathbf{R})$ without global fixed points, if $\rho(\mu)$ and $\rho(\lambda)$ have the same set of fixed points, then $\rho(k)$ does not have fixed points.

\end{conj}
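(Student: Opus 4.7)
The plan is to view this conjecture as the far-reaching generalization of the existence statements embedded in Theorems \ref{main-variant}, \ref{main-satellite} and \ref{main-v2503}, and to construct the element $k$ from structural features common to every L-space knot. The starting point is the theorem of Ni, as refined by Juh\'asz and Ghiggini, that every L-space knot is fibered; thus $S^3\setminus K$ is the mapping torus of a monodromy $\phi$ acting on a once-punctured surface $\Sigma$ of genus $g=g(K)$, and the knot group has the HNN presentation $\pi_1(S^3\setminus K)=\langle F,\mu\mid \mu\gamma\mu^{-1}=\phi_*(\gamma),\ \gamma\in F\rangle$, where $F=\pi_1(\Sigma)$ is free of rank $2g$ and the longitude $\lambda\in F$ is the product of commutators that spells the boundary loop.

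Let $\rho\colon\pi_1(S^3\setminus K)\to\mbox{Homeo}_+(\mathbf{R})$ be a representation with no global fixed point such that $\mathrm{Fix}(\rho(\mu))=\mathrm{Fix}(\rho(\lambda))=:E$. The natural first candidate is $k_0=\mu^{2g-1}\lambda$, the element cutting out the smallest integer L-space surgery. On each complementary interval of $E$ the homeomorphisms $\rho(\mu)$ and $\rho(\lambda)$ translate monotonically, so if both act on the same side of the identity there, then $\rho(k_0)$ inherits this property and has no fixed point outside $E$. To force the two signs to agree, I would use that $\mu$ and $\lambda$ commute: the commuting pair $\rho(\mu),\rho(\lambda)$ restricted to a complementary interval gives an action of $\mathbf{Z}^2$ by orientation-preserving homeomorphisms, and a translation-number argument combined with the L-space slope bound $2g-1>0$ should rule out the bad sign configuration for $k_0$ on these intervals.

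The harder half is the behaviour on $E$ itself, where $\rho(\mu)$ and $\rho(\lambda)$ both act as the identity and $\rho(k_0)$ consequently does too. Here $k$ must be modified to contain letters that genuinely leave the peripheral subgroup. Since the absence of global fixed points forces some element outside $\langle\mu,\lambda\rangle$ to move some point of $E$, I would choose an element $\alpha\in F$ witnessing such motion---concretely, a word in the monodromy orbit of a non-boundary generator of $F$---and set $k=\mu^{2g-1}\lambda\cdot\alpha^N$ for a suitable $N$, arranging by a ping-pong argument that $\rho(k)$ moves every point, both outside and inside $E$. The L-space hypothesis enters to guarantee that such an $\alpha$ can be extracted algebraically from the fibered structure, in the spirit of how Section \ref{main-section} extracts $k$ from the $1$-bridge braid word.

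The main obstacle is that L-space knots have no uniformly explicit combinatorial description comparable to the braid words used in Section \ref{braid-word-section}, so the final choice of $\alpha$, and hence of $k$, cannot be read off mechanically from the knot group presentation. A plausible incremental route is to verify the conjecture first for positive braid L-space knots, where braid-generator-based candidates are directly available, then for L-space cables and L-space satellites with $1$-bridge braid patterns via Theorem \ref{main-satellite} as a template, and finally to bootstrap through the JSJ decomposition of more general L-space knot complements. Until such a uniform construction is produced, I do not expect the conjecture to be established in full generality.
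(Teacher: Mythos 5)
The statement you are addressing is not proved in the paper at all: it is stated as an open \emph{Conjecture} in Section \ref{introduction}, offered as speculation motivated by Theorems \ref{main-variant}, \ref{main-satellite} and \ref{main-v2503}. So there is no proof in the paper to compare against, and your own text concedes at the end that you do not expect to establish the statement. What you have written is a research plan, not a proof, and it should be assessed as such.

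Within that plan there are two concrete gaps worth naming. First, a quantifier problem: the conjecture requires a single element $k$ chosen in the knot group \emph{before} $\rho$ is given, valid for every fixed-point-free $\rho$ with $\mathrm{Fix}(\rho(\mu))=\mathrm{Fix}(\rho(\lambda))$. Your construction picks $\alpha\in F$ as ``an element witnessing such motion'' of a point of $E$, which depends on $\rho$; making $\alpha$ (and the exponent $N$) uniform over all representations is precisely the hard content of the conjecture, and the ping-pong argument you invoke is not available without a specific dynamical hypothesis on $\rho(\alpha)$. Second, the claim that a translation-number argument for the commuting pair $\rho(\mu),\rho(\lambda)$ on a complementary interval of $E$, ``combined with the L-space slope bound,'' rules out the bad sign configuration for $\mu^{2g-1}\lambda$ is unsupported: two commuting fixed-point-free homeomorphisms of an interval can translate in opposite directions, and nothing in the peripheral $\mathbf{Z}^2$ alone prevents this. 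Excluding that configuration requires the relations of the whole knot group --- this is exactly what part (b) of property (D) encodes, and what Section \ref{main-section} extracts by a lengthy computation from the explicit relator $r_\lambda$ of the $1$-bridge braid presentation. The L-space condition is a Heegaard Floer hypothesis, and no mechanism is currently known that converts it into the required monoid membership for a general L-space knot; your fibredness/HNN setup is correct but does not supply one. Your proposed incremental programme (positive braid closures, then cables and satellites, then JSJ pieces) is reasonable and consistent with what the paper actually proves, but it does not constitute progress on the general statement.
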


\section*{Acknowledgement}
The author thanks his advisor Professor Zoltán Szabó and colleague Konstantinos Varvarezos for helpful discussions.

\section{Knot group of $1$-bridge braid}\label{knot_group_sectiion}
In this section, we will investigate the knot group of a $1$-bridge braid. 

Let $B(\omega,t,b)$ be a $1$-bridge braid in $S^1 \times D^2$. The meridional disk $D$ of the solid torus $S^1 \times D^2$, the arc $\gamma$ and the bridge $\delta$ are explained in the definition of $1$-bridge braid in Section \ref{introduction}. Let the solid torus $S^1 \times D^2$ lie in the interior of a slightly larger solid torus $N(S^1 \times D^2)$. Then the knot group of $B(\omega, t,b)$ is $\pi_1(N(S^1 \times D^2)\setminus (\gamma \cup \delta))$. 

The bridge $\delta$ divides $D$ into two oriented embedded disks $D_x$ and $D_y$, where $\gamma$ intersects $\partial D_x$ positively in $b$ points, and $\gamma$ intersects $\partial D_y$ positively in $\omega-b-1$ points. The fundamental group of $(S^1\times D^2)\setminus \delta$ is the free group generated by $x$ and $y$, where $x$ represent a loop which intersects $D_x$ positively once and does not intersect $D_y$, and $y$ represent a loop which does not intersect $D_x$ and intersects $D_y$ positively once.

The space $N(S^1\times D^2) \setminus (S^1\times D^2)$ is homotopy equivalent to a torus, so its fundamental group is $\mathbf{Z}\times \mathbf{Z}$, generated by a meridian $r_\mu$ and a longitude $r_\lambda$ of $N(S^1\times D^2)$ in the knot group. 

The space $(S^1\times D^2)\setminus \delta$ is a genus two handlebody, so its fundamental group is freely generated by two elements $x$ and $y$, where $x$ represents a curve in $(S^1\times D^2)\setminus \delta$ which intersects $D_x$ positively once and does not intersect $D_y$, and $y$ represents a curve in $(S^1\times D^2)\setminus \delta$ which intersects $D_y$ positively once and does not intersect $D_x$.

The space $\partial(S^1\times D^2)\setminus\gamma$ is homotopy equivalent to a punctured torus, so its fundamental group is freely generated by two elements, which represent a homotopical meridian and a homotopical longitude of $S^1\times D^2$ which does not intersect the arc $\gamma$. In particular, the inclusion map $\partial(S^1 \times D^2)\setminus \gamma \hookrightarrow N(S^1\times D^2) \setminus (S^1\times D^2)$ induces an epimorphism in fundamental groups. 

The knot complement $N(S^1 \times D^2)\setminus (\gamma \cup \delta)$ can be obtained by gluing $(S^1\times D^2)\setminus \delta$ with $N(S^1\times D^2) \setminus (S^1\times D^2)$ along $\partial(S^1\times D^2)\setminus\gamma$. By Seifert-van Kampen theorem, the knot group of $B(\omega,t,b)$ is generated by $x$ and $y$, with a relation $r_\mu r_\lambda=r_\lambda r_\mu$, where $r_\mu$ and $r_\lambda$ are elements generated by $x$ and $y$, which represent a homotopical meridian and a homotopical longitude of $S^1\times D^2$ which does not intersect the arc $\gamma$, i.e., we have
$$\pi_1(N(S^1 \times D^2)\setminus (\gamma \cup \delta))= \langle x,y | r_\mu r_\lambda= r_\lambda r_\mu,r_\mu=r_\mu(x,y),r_\lambda=r_\lambda(x,y) \rangle .$$ 

By definition, the element 
$$\mu= x y^{-1}$$
represent a meridian of $B(\omega,t,b)$.

To find the functions $r_\mu(x,y)$ and $r_\lambda(x,y)$, we first consider a meridian and a longitude of $S^1\times D^2$. Assume that the meridian intersects the arc $\gamma$ in $\omega$ points, namely $Q_1, Q_2,\ldots, Q_\omega$, and it does not intersect $\partial D$. Assume that the longitude intersects $\partial D_y$ positively once and then intersects the arc $\gamma$ in $t$ points, namely $P_t, P_{t-1},\ldots, P_1$ in order. Then, we isotope the curve so that the intersection points $Q_1,Q_2,\ldots,Q_{\omega}$ and $P_t, P_{t-1},\ldots,  P_1$ move in the positive direction of the arc $\gamma$ past its endpoint. The resulting curves are represented by $r_\mu$ and $r_\lambda$.

Let $P_0$ be the starting point of the arc $\gamma$. For each integer $i$ with $0\le i\le t$ (resp., each integer $j$ with $1\le j\le\omega$), in the process when the crossing point $P_i$ (resp., $Q_j$) moves in the positive direction of the arc $\gamma$ towards its endpoint, starting from identity, each time the point goes past $\partial D_x$ we right multiply the element by $x$, and each time the point goes past $\partial D_y$ we right multiply the element by $y$. Let the resulting element be $g_i$ (resp., $h_j$). Then we have
$$r_\mu = (h_1\mu h_1^{-1})(h_2\mu h_2^{-1})\cdots(h_\omega\mu h_\omega^{-1}), $$
and 
$$r_\lambda= y (g_t \mu^{-1} g_t^{-1})(g_{t-1} \mu^{-1}  g_{t-1}^{-1})\cdots (g_1  \mu^{-1}  g_1^{-1}).$$

Furthermore, we have the following properties. 
\begin{enumerate}
    \item $g_i$ ($1\le i\le t$) and $h_j$ ($1\le j\le \omega$) are suffixes of $g_0$; 
    \item If $i\equiv j\pmod{\omega}$, then $g_i = h_j$.
\end{enumerate}

The longitude of $B(\omega,t,b)$ is represented by $\lambda= y g_0 \mu^k$ for some integer $k$ to cancel out the linking number. In $H_1(N(S^1 \times D^2)\setminus (\gamma \cup \delta))/[r_\lambda]$, we have $0=[r_\lambda]= [y] - t[\mu]$, so $[y]=t[\mu]$. We also have $[\lambda]= [y]+[g_0]+k [\mu]=b[x]+(\omega-b)[y]+k[\mu]=\omega [y]+(k+b)[\mu] =(\omega t + k + b)[\mu]$. So we have $k=-\omega t-b$. Therefore we have
$$\lambda=y g_0 \mu^{-\omega t-b}.$$ 

\section{Partial order $\le_C$, root-closed, conjugacy-closed submonoid and preorder $\le_k$}

In this section, we will define a few terms and prove relevant theorems to describe our method of proving non-left-orderability. 

Let $G$ be a group with a left order $\le$. We define a binary relation $\le_C$ as follows. 

\begin{definition}
For $f,g \in G$, we say $f\le_C g$ when $f h\le g h$ for every $h\in G$.
\end{definition}

Then $\le_C$ has the following properties.

\begin{theorem}\label{partial-order-thm}
The binary relation $\le_C$ is a partial order which is invariant under left multiplication and right multiplication.
\end{theorem}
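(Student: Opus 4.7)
The plan is to verify the four claims---reflexivity, antisymmetry, transitivity, and the two invariance properties---directly from the definition, drawing on the corresponding properties of the underlying left order $\le$. Each verification is essentially a one-line manipulation, so I will organize them in the order they naturally depend on one another.

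First I would check that $\le_C$ is a partial order. Reflexivity $f \le_C f$ is immediate from reflexivity of $\le$ applied to each product $fh$. For antisymmetry, if $f \le_C g$ and $g \le_C f$, then in particular taking $h = e$ gives both $f \le g$ and $g \le f$, so $f = g$ by antisymmetry of $\le$. For transitivity, if $f \le_C g$ and $g \le_C k$, then for every $h \in G$ we have $fh \le gh$ and $gh \le kh$, so $fh \le kh$ by transitivity of $\le$, giving $f \le_C k$.

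Next I would verify the two invariance properties, which both reduce to rewriting the quantified inequality using associativity. For right invariance, suppose $f \le_C g$ and fix $k \in G$; then for every $h \in G$ we have $(fk)h = f(kh) \le g(kh) = (gk)h$ since $f \le_C g$ applied to the element $kh$. Hence $fk \le_C gk$. For left invariance, suppose $f \le_C g$ and fix $k \in G$; then for every $h \in G$, the inequality $fh \le gh$ holds by assumption, and applying left invariance of the left order $\le$ with the group element $k$ yields $k(fh) \le k(gh)$, i.e.\ $(kf)h \le (kg)h$. This gives $kf \le_C kg$.

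There is no real obstacle here: the statement is essentially a bookkeeping lemma whose content is that taking the "right-translation closure" of a left order preserves transitivity/antisymmetry and upgrades left-invariance to two-sided invariance, at the price of giving up totality. The only subtlety worth flagging in the write-up is that antisymmetry truly requires picking the specific test element $h = e$, since it is this specialization that lets antisymmetry of $\le_C$ inherit from antisymmetry of $\le$.
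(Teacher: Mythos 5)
Your proof is correct and follows essentially the same direct verification as the paper's, which likewise derives reflexivity, transitivity, and antisymmetry from the corresponding properties of $\le$ and obtains two-sided invariance by left-multiplying the defining inequality and substituting $h \mapsto h''h$. Your write-up is slightly more explicit (in particular, spelling out that antisymmetry requires the test element $h = e$), but the argument is the same.
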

\begin{proof}
The reflexivity, transitivity and antisymmetry of $\le$ implies the reflexivity, transitivity and antisymmetry of $\le_C$ respectively. Therefore $\le_C$ is a partial order.

Suppose $f\le_C g$. By definition, $fh \le gh$ for every $h\in G$. Because $\le$ is invariant under left multiplication, we have $h' f h\le h' g h$ for every $h', h \in G$. Because $h$ is arbitrary, we replace $h$ with $h'' h$ to deduce that $h' f h'' h\le h' f h'' h$ for every $h',h'',h\in G$. By definition, $h'f h''\le_C h'f h''$ for every $h',h''\in G$.  
\end{proof}

\begin{theorem}\label{partial-order-root}
For $n\in \mathbf{N}^+$ and $f\in G$, if $f^n\le_C 1$, then $f\le_C 1$.
\end{theorem}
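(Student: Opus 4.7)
The plan is to prove the contrapositive: if $f \not\le_C 1$, then $f^n \not\le_C 1$ for every positive integer $n$. By the definition of $\le_C$, the failure of $f \le_C 1$ means there exists some $h_0 \in G$ with $fh_0 \not\le h_0$. Since the underlying left order $\le$ is total, this is equivalent to $f h_0 > h_0$. Applying left invariance of $\le$ with $h_0^{-1}$, this in turn becomes $h_0^{-1} f h_0 > 1$.

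Set $g := h_0^{-1} f h_0$. The next step is the standard fact that in any left-ordered group, $g > 1$ implies $g^n > 1$ for every $n \ge 1$. I would verify this by induction on $n$: the base case is the hypothesis, and if $g^{n-1} > 1$ then left invariance gives $g \cdot g^{n-1} > g \cdot 1 = g$, and transitivity with $g > 1$ yields $g^n > 1$.

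Since $g^n = h_0^{-1} f^n h_0$, the inequality $g^n > 1$ gives $h_0^{-1} f^n h_0 > 1$, and then left invariance (applied with $h_0$) gives $f^n h_0 > h_0$. This contradicts $f^n \le_C 1$, completing the contrapositive.

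The only subtle point is the bookkeeping between the partial order $\le_C$ (a universally quantified statement over $h$) and the total left order $\le$ (which lets us witness a failure pointwise); once that is in place, the argument is a one-line application of the elementary fact that positive elements in a left-ordered group have positive powers. So there is no real obstacle, just the need to use totality of $\le$ at the very first step to extract a concrete witness $h_0$.
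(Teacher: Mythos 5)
Your proof is correct and is essentially the paper's argument in contrapositive form: both use totality of $\le$ to reduce to the case $fh_0 > h_0$ and then iterate left invariance to obtain the chain $h_0 < fh_0 < \cdots < f^n h_0$ (the paper phrases this directly, for arbitrary $g$, as a dichotomy on whether $g \le fg$). The conjugation by $h_0^{-1}$ is a cosmetic repackaging of the paper's direct manipulation of the elements $f^k g$.
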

\begin{proof}
Suppose $f^n\le_C 1$. Let $g$ be an arbitrary element in $G$. By definition, we have $f^n g \le g$. 

If $g\le fg$, then $f^k g\le f^{k+1}g$ for any integer $k$. Hence we have $fg \le f^2 g\le \cdots \le f^n g\le g$. Since we have $fg\le g$ if $g\le fg$, and $\le$ is a total order, we always have $fg\le g$. By definition, we have $f\le_C 1$.
\end{proof}

\begin{theorem}\label{partial-order-extension}
The total order $\le$ is a linear extension of $\le_C$.
\end{theorem}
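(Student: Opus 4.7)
The plan is direct: by definition, $f \le_C g$ means $fh \le gh$ for every $h \in G$, and the conclusion $f \le g$ is simply the instance $h = 1$. So I would substitute $h = 1$ in the defining inequality to obtain $f \cdot 1 \le g \cdot 1$, which gives $f \le g$. Combined with the fact that $\le$ is a total order (being a left order on $G$) and that $\le_C$ is a partial order by Theorem \ref{partial-order-thm}, this shows $\le$ is a linear extension of $\le_C$ in the sense of order theory.

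There is essentially no obstacle here; the theorem is an immediate unpacking of the definition of $\le_C$. The only thing worth noting is the conceptual role of the statement: it confirms that the partial order $\le_C$, although defined through a universal quantification over right translates, is always contained in the original left order $\le$, so working with $\le_C$ never introduces relations incompatible with the assumed left order. This is what allows the later argument to derive a contradiction by producing an element $f$ with $f \le_C 1$ and $1 \le_C f$ but $f \ne 1$, since antisymmetry of $\le_C$ (already recorded in Theorem \ref{partial-order-thm}) would then fail.
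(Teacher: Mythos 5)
Your proof is correct and is exactly the paper's argument: substituting $h=1$ in the definition of $\le_C$ yields $f\le g$ whenever $f\le_C g$. The additional remarks on the conceptual role of the statement are accurate but not needed for the proof itself.
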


\begin{proof}
By taking $h=1$ in the definition of $\le_C$, we have that $f\le_C g$ implies $f\le g$. 
\end{proof}

We summarize the properties of its positive cone in the following definition.

\begin{definition}
The root-closed, conjugacy-closed submonoid $M$ of $G$ generated by a set $S$ is defined as the minimal subset of $G$ containing $S$ and identity, with the following properties:
\begin{enumerate}[label=(\alph*)]
    \item If $f,g\in M$, then $fg \in M$;
    \item If $f\in M$ and $g\in G$, then $g f g^{-1}\in M$;
    \item If $n\in \mathbf{N}^+$ and $f^n\in M$, then $f\in M$.
\end{enumerate}
\end{definition}

It was shown \cite{Hol} that there exists a monomorphism $\rho: G\to \mbox{Homeo}_+(\mathbf{R})$ without global fixed points, such that $f\le g$ if and only of $\rho(f)(0)\le \rho(g)(0)$ for every $f,g \in G$. Suppose that, for some specific set $S_k$ of elements $g$ in $G$, we have proved the homeomorphisms $\rho(g)$ $(g\in S_k)$ are either all pointwise not less than the identity map, or all pointwise not greater than the identity map. Then we define another useful binary relation as follows.

\begin{definition}
For $f,g \in G$, we say $f\le_k g$ when $f^{-1} g$ is in the root-closed, conjugacy-closed submonoid generated by the subset $S_k$ of $G$.
\end{definition}

Notice that $\le_k$ does not depend on the choice of the left order $\le$, but only depend on the generating set $S_k$. The following two properties of $\le_k$ are similar to Theorem \ref{partial-order-thm} and Theorem \ref{partial-order-root}, and they even do not rely on the existence of a left order.

\begin{theorem}\label{preorder-thm}
The binary relation $\le_k$ is a preorder which is invariant under left multiplication and right multiplication.
\end{theorem}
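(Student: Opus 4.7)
The plan is to verify reflexivity and transitivity of $\le_k$, together with invariance under left and right multiplication, each using exactly one of the defining closure properties of the root-closed, conjugacy-closed submonoid $M$ generated by $S_k$.

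For reflexivity, I observe that $f \le_k f$ amounts to $f^{-1}f = 1 \in M$, which holds because $M$ contains the identity by definition. For transitivity, suppose $f \le_k g$ and $g \le_k h$, so that $f^{-1}g \in M$ and $g^{-1}h \in M$. Then by property (a) of $M$ (closure under multiplication), their product $(f^{-1}g)(g^{-1}h) = f^{-1}h$ lies in $M$, which gives $f \le_k h$.

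For left invariance, if $f \le_k g$ and $h \in G$, the straightforward cancellation $(hf)^{-1}(hg) = f^{-1}h^{-1}hg = f^{-1}g \in M$ yields $hf \le_k hg$ with no further closure property needed. Right invariance is the only step requiring property (b): from $f \le_k g$ we have $f^{-1}g \in M$, and then
\[
(fh)^{-1}(gh) = h^{-1}(f^{-1}g)h
\]
is a conjugate of an element of $M$, hence in $M$ by conjugacy-closure, giving $fh \le_k gh$.

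There is no genuine obstacle here; the theorem is essentially a bookkeeping check that the three closure properties defining $M$ are exactly tailored to produce a two-sided invariant preorder on $G$. Note that the root-closure property (c) is not used in this proof and will instead be relevant later, presumably when comparing $\le_k$ with $\le_C$ (cf.\ Theorem \ref{partial-order-root}) to upgrade the preorder to a partial order and derive a contradiction with non-left-orderability. The proof also makes no use of the assumption that the generators in $S_k$ act as specific homeomorphisms; $\le_k$ is defined purely group-theoretically from $S_k$, which is consistent with the remark preceding the statement that $\le_k$ depends only on $S_k$ and not on the choice of left order.
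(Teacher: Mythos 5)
Your proof is correct and follows essentially the same route as the paper: reflexivity from the identity in $M$, transitivity from closure under multiplication, left invariance from cancellation of $h^{-1}h$, and right invariance from conjugacy-closure. The added remarks about property (c) being reserved for later and about $\le_k$ being independent of the left order are accurate and consistent with the paper's surrounding discussion.
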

\begin{proof}
Let the root-closed, conjugacy-closed submonoid generated by $S_k$ be $M_k$. $M_k$ has an identity element, so $\le_k$ is reflexive. $M_k$ is closed under multiplication, so $\le_k$ is transitive. Therefore $\le_k$ is a preorder.

Suppose $f\le_k g$. When $f$ and $g$ are both left multiplied by an element, $f^{-1} g$ does not change. So $\le_k$ is invariant under left multiplication. When $f$ and $g$ are both right multiplied by an element, $f^{-1} g$ changes to its conjugation. $M_k$ is closed under conjugation, so $\le_k$ is invariant under right multiplication.
\end{proof}

\begin{theorem}\label{preorder-root}
For $n\in \mathbf{N}^+$ and $f\in G$, if $f^n\le_k 1$, then $f\le_k 1$.
\end{theorem}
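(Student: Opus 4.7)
The plan is to unwind the definitions and invoke the defining property (c) of the root-closed, conjugacy-closed submonoid directly; unlike the analogous Theorem \ref{partial-order-root}, no appeal to a linear order is necessary here.

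First I would translate the hypothesis: by definition, $f^n \le_k 1$ means that $(f^n)^{-1} \cdot 1 = f^{-n}$ lies in $M_k$, the root-closed, conjugacy-closed submonoid generated by $S_k$. Rewriting $f^{-n} = (f^{-1})^n$, the hypothesis becomes $(f^{-1})^n \in M_k$.

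Next I would apply property (c) in the definition of a root-closed, conjugacy-closed submonoid to the element $f^{-1}$ and the exponent $n \in \mathbf{N}^+$: from $(f^{-1})^n \in M_k$ we conclude $f^{-1} \in M_k$. This is exactly the statement $f \le_k 1$, and completes the proof.

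The only thing to be careful about is the direction of the inverse when translating between the relation $\le_k$ and membership in $M_k$; once that bookkeeping is done, the argument is a one-line consequence of the fact that $M_k$ was defined to be closed under taking $n$-th roots. There is no genuine obstacle, which is why this theorem can be stated without reference to any left order, in contrast to Theorem \ref{partial-order-root} where the total order on $G$ is essential.
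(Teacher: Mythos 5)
Your proof is correct and is exactly the argument the paper gives (the paper simply says the submonoid is closed under taking roots); your version just carefully spells out the bookkeeping that $f^n\le_k 1$ means $(f^{-1})^n\in M_k$ and that property (c) then yields $f^{-1}\in M_k$, i.e. $f\le_k 1$.
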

\begin{proof}
Since a root-closed, conjugacy-closed submonoid is closed under taking roots, the theorem holds.
\end{proof}

For a left orderable group $G$, the preorder $\le_k$ has the following property, which is similar to Theorem \ref{partial-order-extension}.

\begin{theorem}\label{extension-thm}
For the group $G$ with a left order $\le$ and a compatible monomorphism $\rho$, suppose that the homeomorphisms $\rho(g)$ $(g\in S_k)$ are either all pointwise not less than the identity map, or all pointwise not greater than the identity map. Then the partial order $\le_C$ is either an extension of $\le_k$, or an extension of the inverse of $\le_k$. 
\end{theorem}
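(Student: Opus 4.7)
The plan is to use the fact that $\rho$ identifies $G$ with a subgroup of $\mbox{Homeo}_+(\mathbf{R})$ acting by order-preserving homeomorphisms, and to transfer the pointwise comparison with the identity from $S_k$ to the whole root-closed, conjugacy-closed submonoid $M_k$ it generates. Without loss of generality I will treat the case where every $\rho(g)$ with $g\in S_k$ satisfies $\rho(g)(x)\ge x$ for all $x\in\mathbf{R}$; the opposite sign case is completely symmetric. I set
$$M':=\{h\in G:\rho(h)(x)\ge x\text{ for every }x\in\mathbf{R}\}$$
and aim to prove $M_k\subseteq M'$, then convert this containment into the $\le_C$ comparison using the compatibility between $\rho$ and $\le$.

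First I will verify that $M'$ contains the identity and $S_k$ and inherits the three closure properties used to define $M_k$. Closure under multiplication follows from $\rho(fg)(x)=\rho(f)(\rho(g)(x))\ge\rho(f)(x)\ge x$, invoking the order-preserving property of $\rho(f)$ together with $f\in M'$. Closure under conjugation uses the substitution $y=\rho(g)^{-1}(x)$: from $\rho(h)(y)\ge y$ and the monotonicity of $\rho(g)$ I obtain $\rho(ghg^{-1})(x)=\rho(g)(\rho(h)(y))\ge\rho(g)(y)=x$. Closure under roots follows because, if $\rho(f)(x_0)<x_0$ for some $x_0$, monotonicity forces $\rho(f)^k(x_0)$ to be strictly decreasing in $k$, so $\rho(f)^n(x_0)<x_0$, contradicting $f^n\in M'$. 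By minimality of $M_k$ this yields $M_k\subseteq M'$.

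Second, I will convert membership in $M'$ into the $\le_C$ comparison. If $f\le_k g$, then $f^{-1}g\in M_k\subseteq M'$, so $\rho(f)^{-1}\rho(g)(y)\ge y$ for every $y\in\mathbf{R}$. Applying the order-preserving map $\rho(f)$ to both sides gives $\rho(g)(y)\ge\rho(f)(y)$ for every $y$. For an arbitrary $h\in G$ I specialize to $y=\rho(h)(0)$, obtaining $\rho(gh)(0)\ge\rho(fh)(0)$, which by compatibility of $\rho$ with $\le$ translates to $gh\ge fh$. Since $h$ was arbitrary, this is exactly $f\le_C g$, so $\le_C$ extends $\le_k$. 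The dual case is handled by the symmetric argument with every inequality reversed, giving that $\le_C$ extends the inverse of $\le_k$.

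I do not anticipate a serious obstacle: once the candidate submonoid $M'$ has been identified, the proof is essentially bookkeeping. The only point requiring care is the direction in which the monotonicity of $\rho(f)$ is applied when promoting pointwise inequalities through products and conjugates, and the fact that $\le_C$ a priori only constrains values on the $\rho(G)$-orbit of $0$ while $M'$ supplies pointwise comparison on all of $\mathbf{R}$; this mismatch, however, only makes the implication easier rather than harder.
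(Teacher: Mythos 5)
Your proof is correct. The overall strategy coincides with the paper's --- show that the generating set $S_k$ lies in a positive cone closed under multiplication, conjugation and roots, deduce by minimality that all of $M_k$ does, and then translate membership into a $\le_C$ comparison --- but you realize it with a different intermediary cone. The paper uses the cone $\{h\in G : h\ge_C 1\}$ and simply cites Theorems \ref{partial-order-thm} and \ref{partial-order-root}, which were proved earlier precisely so that the closure of this cone under the three operations comes for free; the only analytic input there is that each $\rho(g)$, $g\in S_k$, dominates the identity on the orbit $\rho(G)(0)$. You instead work with the cone $M'$ of homeomorphisms pointwise $\ge$ the identity on all of $\mathbf{R}$ and verify the three closure axioms from scratch by elementary dynamics (your checks of the product, conjugation and root cases are all sound, the root case correctly using that an orientation-preserving homeomorphism is strictly increasing). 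Your route is self-contained, does not lean on the earlier order-theoretic lemmas, and yields the marginally stronger containment $M_k\subseteq M'$, i.e.\ pointwise domination on all of $\mathbf{R}$ rather than only on the orbit of $0$; the paper's route is shorter at this point because the closure properties of the $\le_C$-cone were already established. Your closing remark about the mismatch between the orbit of $0$ and all of $\mathbf{R}$ is exactly right: it only makes the final implication easier.
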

\begin{proof}
If the homeomorphisms $\rho(g)$ $(g\in S_k)$ are all pointwise not less than the identity map, then for an arbitrary choice of $h\in G$, we have $\rho(gh)(0)= \rho(g)\rho(h)(0)>\rho(g)(0)$. By definition of $\rho$, we have $gh>h$ for every $g\in S_k$ and $h\in G$, that is, $g >_C 1$ for any $g\in S_k$. By Theorem \ref{partial-order-thm} and Theorem \ref{partial-order-root}, $h\ge_C 1$ for every $h$ in the root-closed, conjugacy-closed submonoid generated by $S_k$. By taking $h=f^{-1}g$, we have that $f\le_k g$ implies $f\le_C g$.

If the homeomorphisms $\rho(g)$ $(g\in S_k)$ are all pointwise not greater than the identity map, similarly we have that $f\le_k g$ implies $g\le_C f$.
\end{proof}

\begin{corollary}\label{preorder-partial-order}
With the conditions of Theorem \ref{extension-thm}, the preorder $\le_k$ is a partial order.
\end{corollary}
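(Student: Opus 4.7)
The plan is to deduce the corollary as an essentially formal consequence of the preceding theorems. By Theorem \ref{preorder-thm}, the binary relation $\le_k$ is already reflexive and transitive, so the only remaining task is to establish antisymmetry: I must show that if $f\le_k g$ and $g\le_k f$, then $f=g$.

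To prove antisymmetry, I would invoke Theorem \ref{extension-thm}, whose hypotheses are exactly those inherited by the corollary. That theorem splits into two cases. If the homeomorphisms $\rho(g)$ for $g\in S_k$ are all pointwise not less than the identity, then $\le_C$ is an extension of $\le_k$, so the assumptions $f\le_k g$ and $g\le_k f$ yield $f\le_C g$ and $g\le_C f$. If instead they are all pointwise not greater than the identity, then $\le_C$ extends the inverse of $\le_k$, and the same two assumptions yield $g\le_C f$ and $f\le_C g$. In either case the partial order $\le_C$ from Theorem \ref{partial-order-thm} is antisymmetric, so $f=g$.

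I do not foresee any genuine obstacle here, since the real content is already packaged in Theorems \ref{partial-order-thm}, \ref{preorder-thm} and \ref{extension-thm}. The only point worth flagging in the write-up is the following subtle asymmetry: although $\le_k$ by definition depends only on the generating set $S_k$ and not on any choice of left order on $G$, the antisymmetry proof passes through the auxiliary partial order $\le_C$, which does depend on the chosen left order $\le$ and the compatible monomorphism $\rho$ supplied by the hypotheses of Theorem \ref{extension-thm}. The conclusion that $\le_k$ is a partial order is nevertheless intrinsic to $(G,S_k)$.
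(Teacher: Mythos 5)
Your proof is correct and follows essentially the same route as the paper's (one-line) proof: both pass the relations $f\le_k g$ and $g\le_k f$ through Theorem \ref{extension-thm} into the partial order $\le_C$ and conclude $f=g$ by antisymmetry, which ultimately rests on the antisymmetry of the total order $\le$. Your remark on the asymmetry between the intrinsic definition of $\le_k$ and the order-dependent auxiliary $\le_C$ is accurate and consistent with the paper's own emphasis.
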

\begin{proof}
The antisymmetry of $\le$ implies the antisymmetry of $\le_k$.
\end{proof}

\section{Nontrivial knots which are closures of $1$-bridge braids have property (D)}\label{main-section}
In this section, we will define the property (D) and prove relevant theorems. We will also complete the proof of Theorem \ref{main} and Theorem \ref{main-variant}.

As in Section \ref{introduction}, we define property (D) for nontrivial knots as follows.

\begin{definition}
For a nontrivial knot $K$ with $\mu$ and $\lambda$ representing a meridian and a longitude in the knot group, we say $K$ has property (D) if 
\begin{enumerate}[label=(\alph*)]
    \item for any homomorphism $\rho$ from $\pi_1(S^3\setminus K)$ to $\mbox{Homeo}_+(\mathbf{R})$, if $s\in\mathbf{R}$ is a common fixed point of $\rho(\mu)$ and $\rho(\lambda)$, then $s$ is a fixed point of every element in $\pi_1(S^3\setminus K)$, and,
    \item $\mu$ is in the root-closed, conjugacy-closed submonoid generated by $\mu^{2g(K)-1}\lambda$ and $\mu^{-1}$.
\end{enumerate}
\end{definition}

Now we prove that property (D) implies the non-left-orderability.

\begin{theorem}\label{D-implies-not-orderable}
Suppose a nontrivial knot $K$ has property (D), then the fundamental group of the manifold obtained by Dehn surgery on $K$ with slope $\frac{p}{q}\ge 2g(K)-1$ is not left orderable.
\end{theorem}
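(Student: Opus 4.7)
The plan is to argue by contradiction. Suppose the fundamental group $G$ of the $\frac{p}{q}$-surgery manifold on $K$ is left orderable. By Holland's theorem there is a homomorphism $\rho\colon G\to\mbox{Homeo}_+(\mathbf{R})$ without global fixed points, and composing with the quotient map $\pi_1(S^3\setminus K)\to G$ yields a homomorphism $\tilde\rho$ of the knot group that satisfies $\tilde\rho(\mu)^p\tilde\rho(\lambda)^q=\mathrm{id}$ and still has no global fixed point.

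The first step is to show that $\tilde\rho(\mu)$ has no fixed point on $\mathbf{R}$. Indeed, any fixed point $s$ of $\tilde\rho(\mu)$ would satisfy $\tilde\rho(\lambda)^q(s)=\tilde\rho(\mu)^{-p}(s)=s$, and since every periodic point of an orientation preserving homeomorphism of $\mathbf{R}$ is actually a fixed point, $s$ would then be fixed by $\tilde\rho(\lambda)$ as well. Property (D)(a) would promote $s$ to a global fixed point of $\tilde\rho$, contradicting the choice of $\rho$. Hence $\tilde\rho(\mu)$ lies strictly on one side of the identity pointwise, and $\tilde\rho(\mu^{-1})$ lies strictly on the other.

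Next, because $\mu$ and $\lambda$ commute in the peripheral subgroup, the surgery relation rewrites as $\mu^{s}(\mu^{2g(K)-1}\lambda)^q=1$ with $s=p-(2g(K)-1)q\ge 0$, where the nonnegativity of $s$ is exactly what the slope hypothesis provides. Applying $\tilde\rho$ yields $\tilde\rho(\mu^{2g(K)-1}\lambda)^q=\tilde\rho(\mu)^{-s}$, which lies on the same side of the identity as $\tilde\rho(\mu^{-1})$ (or equals the identity when $s=0$). The root argument of Theorem \ref{partial-order-root}, carried out pointwise on $\mathbf{R}$, passes this inequality from the $q$-th power down to $\tilde\rho(\mu^{2g(K)-1}\lambda)$ itself: if $f(x_0)$ sat on the wrong side of $x_0$, then monotonicity of $f$ would force $f^q(x_0)$ onto the wrong side of $x_0$ as well.

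Finally, property (D)(b) supplies the contradiction. The set of homeomorphisms pointwise $\le\mathrm{id}$ and the set of homeomorphisms pointwise $\ge\mathrm{id}$ are each submonoids of $\mbox{Homeo}_+(\mathbf{R})$ closed under conjugation and under taking $n$-th roots, so $\tilde\rho$ sends the root-closed, conjugacy-closed submonoid of $\pi_1(S^3\setminus K)$ generated by $\mu^{2g(K)-1}\lambda$ and $\mu^{-1}$ entirely into the side of the identity that contains $\tilde\rho(\mu^{-1})$. Since $\mu$ belongs to that submonoid by (D)(b), $\tilde\rho(\mu)$ is pushed onto the same side as $\tilde\rho(\mu^{-1})$, directly contradicting the first step. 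The main bookkeeping obstacle I anticipate is running the two cases (whether $\tilde\rho(\mu)$ sits pointwise above or pointwise below the identity) uniformly; the algebraic key that lets a single rewriting handle both simultaneously is precisely the nonnegativity of $s=p-(2g(K)-1)q$.
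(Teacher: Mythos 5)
Your proposal is correct and follows essentially the same route as the paper: it establishes that $\rho(\mu)$ is fixed-point free via property (D)(a) together with the surgery relation, uses the slope bound to place $(\mu^{2g(K)-1}\lambda)^q$ on the side of $\mu^{-1}$ and takes roots, and finally invokes (D)(b) to force $\mu$ onto both sides of the identity. The only difference is presentational: the paper packages the pointwise one-sidedness into the preorder $\le_k$ generated by $\mu$ and contradicts its antisymmetry via Corollary \ref{preorder-partial-order}, whereas you unwind that machinery into direct pointwise statements about homeomorphisms.
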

\begin{proof}
Let $G$ denote the fundamental group of the manifold obtained by Dehn surgery on $K$ with slope $\frac{p}{q}\ge 2(K)-1$. Let $\mu$ and $\lambda$ represent a meridian and a longitude. Let $\rho: G\to \mbox{Homeo}_+(\mathbf{R})$ be a monomorphism without global fixed points. As a pullback by inclusion, $\rho$ induces a homomorphism from $\pi_1(S^3\setminus K)$ to $\mbox{Homeo}_+(\mathbf{R})$ without global fixed points. Since $\mu^p=\lambda^{-q}$, we have $\rho(\mu)^p=\lambda(\lambda)^{-q}$, so $\rho(\mu)$ and $\rho(\lambda)$ have the same set of fixed points. Because $K$ has property (D), $\rho(\mu)$ does not have fixed points.

By continuity, either $\rho(\mu)$ is pointwise greater than identity map or $\rho(\mu)$ is pointwise less than identity map. We define the preorder $\le_k$ on $G$ generated by $\mu$. Since $1=\mu^p\lambda^q=\mu^{p-(2g(K)-1)q}(\mu^{2g(K)-1} \lambda)^q\ge_k (\mu^{2g(K)-1} \lambda)^q$, by Theorem \ref{preorder-root}, we have $\mu^{2g(K)-1} \lambda \le_k 1$. Since $\mu^{-1}\le_k 1$ ,$\mu^{2g(K)-1} \lambda \le_k 1$ and $\mu$ is in the root-closed, conjugacy-closed submonoid generated by $\mu^{2g(K)-1}\lambda$ and $\mu^{-1}$, we have $\mu\le_k 1$. As $\rho(\mu)$ is different from the identity map, we know $\mu$ and $1$ are different, but $\mu\le_k 1$ and $1\le_k \mu$. Therefore $\le_k$ is not a partial order. By Corollary \ref{preorder-partial-order}, $G$ is not left orderable.
\end{proof}

Next, we will prove that closures of $1$-bridge braids have property (D) (Theorem \ref{main-variant}). Let the nontrivial knot $K$ be the closure of a $1$-bridge braid $B(\omega,t,b)$. The fundamental group $G$, meridian $\mu$, longitude $\lambda$ are defined in the proof of Theorem \ref{D-implies-not-orderable}. As in Section \ref{knot_group_sectiion}, the group $G$ has a presentation
$$G= \langle x,y| r_\lambda=1, \mu^p \lambda^q =1\rangle.$$

Before we prove the part (a) of the property (D), we establish a theorem which serves as a step towards a global fixed point.  

\begin{theorem}\label{fixed-point-general-rule}
Suppose $\rho$ is a homomorphism from a group $G$ to $\mbox{Homeo}_+(\mathbf{R})$. Let $g_1,g_2$ be elements in $G$, $g_3$ be represented by a word with alphabet $\{g_1,g_2\}$ and at least one letter $g_1$, and $g_4$ be represented by a word with alphabet $\{g_1^{-1},g_2\}$ and at least one letter $g_1^{-1}$. If $s\in \mathbf{R}$ is a common fixed point of $\rho(g_3)$ and $\rho(g_4)$, then $s$ is a fixed point of $\rho(g_1)$.
\end{theorem}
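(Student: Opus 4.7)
My plan is proof by contradiction. Set $f=\rho(g_1)$ and $\phi=\rho(g_2)$, and assume $f(s)\neq s$; by replacing $g_1$ with $g_1^{-1}$, which interchanges the hypotheses on $g_3$ and $g_4$, I may take $f(s)>s$. Because $f$ is orientation preserving, there is a maximal open interval $(a,b)\ni s$ on which $f>\mathrm{id}$ pointwise, with $a,b\in\mathrm{Fix}(f)\cup\{\pm\infty\}$, and $f$ maps $(a,b)$ to itself strictly to the right.

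Writing $g_3=a_n\cdots a_1$ with $a_i\in\{g_1,g_2\}$ and $g_4=b_m\cdots b_1$ with $b_j\in\{g_1^{-1},g_2\}$, I would study the two trajectories $s_0=s$, $s_i=\rho(a_i)(s_{i-1})$ (so $s_n=s$), and analogously $t_j$ for $g_4$. A first reduction eliminates intermediate returns to $s$: if $s_i=s$ for some $0<i<n$, then $g_3$ decomposes as a concatenation of two strictly shorter stabilising words, at least one of which still contains the letter $g_1$, so $|g_3|+|g_4|$ decreases. I would therefore induct on $|g_3|+|g_4|$, with base case $g_3=g_1$ (giving $f(s)=s$ immediately) or $g_4=g_1^{-1}$.

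The inductive step reduces to the irreducible case, and the mechanism is already visible in the toy cases $g_3=g_1g_2,\,g_4=g_1^{-1}g_2$ and $g_3=g_2g_1g_2,\,g_4=g_2g_1^{-1}g_2$: combining $\rho(g_3)(s)=s$ and $\rho(g_4)(s)=s$ yields an identity of the shape $h^k(p)=p$ for some $h\in\{f,\phi\}$, $k\geq 1$, and $p\in\{s_i\}\cup\{t_j\}$. Since an orientation preserving homeomorphism with $h^k(p)=p$ must satisfy $h(p)=p$, the equation collapses, and one cascades this new fixed-point relation back through the trajectories to produce $f(s)=s$, contradicting $f(s)>s$. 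The main obstacle I anticipate is controlling those trajectory points that $\phi$ may carry out of the interval $(a,b)$: a purely local argument on $(a,b)$ cannot suffice, because $\phi$ is unconstrained and can move trajectory points to arbitrary locations. The cascade above must therefore be organised using a global combinatorial ordering of the finite set $\{s_i\}\cup\{t_j\}\subset\mathbf{R}$, exploiting the asymmetry that $g_3$ uses only $f$ and $\phi$ while $g_4$ uses only $f^{-1}$ and $\phi$, so that the net displacement contributed by $\phi$ is constrained to be of opposite signs in the two trajectories.
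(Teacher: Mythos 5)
Your proposal is a plan rather than a proof, and the plan has a hole at exactly the point you flag yourself: the ``cascade'' in the inductive step is never constructed. The claim that combining $\rho(g_3)(s)=s$ with $\rho(g_4)(s)=s$ ``yields an identity of the shape $h^k(p)=p$'' is verified only in two toy cases where $g_3$ and $g_4$ have parallel letter patterns; for combinatorially unrelated words (say $g_3=g_1g_2g_1g_2^{3}g_1$ and $g_4=g_2g_1^{-1}$) the two trajectories share no structure, and the ``global combinatorial ordering'' that is supposed to organise the cascade is never defined. The interval $(a,b)$ and the reduction eliminating intermediate returns to $s$ are sound but do not address this, so as written the argument does not close.

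The obstacle dissolves once you stop tracking trajectories and instead case on the sign of $\rho(g_2)(s)-s$, which is what the paper does in a few lines. The only facts needed are: (i) the set of $h\in\mbox{Homeo}_+(\mathbf{R})$ with $h(s)\ge s$ is closed under composition (if $h_1(s)\ge s$ and $h_2(s)\ge s$ then $h_1(h_2(s))\ge h_1(s)\ge s$), and likewise for $\le$; and (ii) writing $g_3=Ag_1B$ with $A,B$ positive words in $g_1,g_2$ gives $g_1=A^{-1}g_3B^{-1}$, so $g_1$ lies in the monoid generated by $\{g_1^{-1},g_2^{-1},g_3\}$, while writing $g_4=Cg_1^{-1}D$ gives $g_1=D\,g_4^{-1}C$, so $g_1$ also lies in the monoid generated by $\{g_1^{-1},g_2,g_4^{-1}\}$. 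Now if, say, $\rho(g_2)(s)\ge s$ and $\rho(g_1)(s)\ge s$, then $\rho(g_1^{-1})$, $\rho(g_2^{-1})$ and $\rho(g_3)$ all move $s$ weakly left, so by (ii) and (i) $\rho(g_1)(s)\le s$, forcing equality; the other three sign combinations are handled symmetrically using whichever of the two expressions for $g_1$ has all of its letters moving $s$ in the same direction. No induction on word length, no intervals, and no control over where $\rho(g_2)$ sends intermediate points is required.
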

\begin{proof}
The element $g_3$ is represented by a word with alphabet $\{g_1,g_2\}$ and at least one letter $g_1$, hence the first condition is that $g_1$ is in the monoid generated by $\{g_1^{-1},g_2^{-1},g_3\}$. The element $g_4$ is represented by a word with alphabet $\{g_1^{-1},g_2\}$ and at least one letter $g_1^{-1}$, hence the second condition is that $g_1$ is in the monoid generated by $\{g_1^{-1},g_2,g_4^{-1}\}$. 

Suppose that $s$ is a real number with $\rho(g_3)s=\rho(g_4)s=s$.

If $\rho(g_2) s\ge s$ and $\rho(g_1)s\ge s$, then by the first condition we have $\rho(g_1)s \le s$.

If $\rho(g_2) s\ge s$ and $\rho(g_1)s\le s$, then by the second condition we have $\rho(g_1)s \ge s$.

If $\rho(g_2) s\le s$ and $\rho(g_1)s\ge s$, then by the second condition we have $\rho(g_1)s \le s$.

If $\rho(g_2) s\le s$ and $\rho(g_1)s\le s$, then by the first condition we have $\rho(g_1)s \ge s$.

In all cases, we have $\rho(g_1)s=s$.
\end{proof}

Now, we prove the part (a) of the property (D).

\begin{lemma}\label{no-fix-point-main}
For any homomorphism $\rho$ from $\pi_1(S^3\setminus K)$ to $\mbox{Homeo}_+(\mathbf{R})$, if $s\in \mathbf{R}$ is a common fixed point of $\rho(\mu)$ and $\rho(\lambda)$, then $s$ is a fixed point of every element in $\pi_1(S^3\setminus K)$.
\end{lemma}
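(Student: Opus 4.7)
The plan is to reduce the lemma to verifying that $\rho(x)(s)=s$ and $\rho(y)(s)=s$, since the knot group $\pi_1(S^3\setminus K)=\langle x,y\mid r_\lambda=1\rangle$ is generated by $x$ and $y$, so any element is a word in these generators and their inverses. The starting point is the formula $\lambda=yg_0\mu^{-\omega t-b}$ derived in Section \ref{knot_group_sectiion}, which rearranges to $yg_0=\lambda\mu^{\omega t+b}$. Combining this with the hypothesis that $\rho(\mu)$ and $\rho(\lambda)$ both fix $s$ shows that $\rho(yg_0)(s)=s$. Crucially, $g_0$ is, by its construction in Section \ref{knot_group_sectiion}, a positive word in $\{x,y\}$ of length $\omega-1$, so $yg_0$ is a positive word in $\{x,y\}$ of length $\omega$ whose leading letter is $y$.

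The heart of the argument will then be a single application of Theorem \ref{fixed-point-general-rule} with $g_1=y$, $g_2=x$, $g_3=yg_0$, and $g_4=\mu=xy^{-1}$. The word $g_3=yg_0$ has alphabet $\{y,x\}=\{g_1,g_2\}$ and contains at least one occurrence of $y=g_1$ (the leading letter), while $g_4=xy^{-1}$ has alphabet $\{x,y^{-1}\}=\{g_2,g_1^{-1}\}$ and contains one occurrence of $y^{-1}=g_1^{-1}$. Both $\rho(g_3)(s)=s$ and $\rho(g_4)(s)=\rho(\mu)(s)=s$ by construction, so Theorem \ref{fixed-point-general-rule} yields $\rho(y)(s)=s$.

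With $\rho(y)(s)=s$ in hand, the identity $x=\mu y$ gives $\rho(x)(s)=\rho(\mu)(\rho(y)(s))=\rho(\mu)(s)=s$. Since $x$ and $y$ generate $\pi_1(S^3\setminus K)$, every element of the knot group fixes $s$, which is the desired conclusion. No case analysis on $\omega$, $t$, or $b$ is required, and the constraint $0\le b\le\omega-2$ is used only implicitly, in that $yg_0$ is a well-defined positive word containing $y$ as its first letter. The only potential obstacle is packaging the hypothesis in exactly the form demanded by Theorem \ref{fixed-point-general-rule}, but the natural choice of $g_3=yg_0$ (coming from $\lambda$) and $g_4=\mu$ makes the matching transparent.
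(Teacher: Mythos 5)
Your proposal is correct and follows essentially the same route as the paper: both rewrite $yg_0$ in terms of $\mu$ and $\lambda$ so that $\rho(yg_0)$ fixes $s$, apply Theorem \ref{fixed-point-general-rule} with $g_1=y$, $g_2=x$, $g_3=yg_0$, $g_4=\mu=xy^{-1}$ to conclude $\rho(y)s=s$, and then use $x=\mu y$ and the fact that $x,y$ generate the knot group. The only cosmetic difference is that you write $yg_0=\lambda\mu^{\omega t+b}$ while the paper writes $\mu^{\omega t+b}\lambda$ (harmless, since $\mu$ and $\lambda$ commute and in any case both factors fix $s$).
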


\begin{proof}
Since $s$ is a fixed point of $\rho(xy^{-1})=\rho(\mu)$ and $\rho (y g_0)=\rho(\mu^{\omega t+b} \lambda)=\rho(\mu)^{\omega t+b} \rho(\lambda)$, and $y g_0$ is represented by a word with $b$ letter $x$'s and $\omega-b$ letter $y$'s, by Theorem \ref{fixed-point-general-rule}, we have $\rho(y)s=s$. Since $x=\mu y$, we have $\rho(x)s=s$. Since $G$ is generated by $x$ and $y$, $s$ is a fixed point of every element in $\pi_1(S^3\setminus K)$.
\end{proof}

Then, we prove the part (b) of the property (D).

Let $\tilde{g}_0=1,\tilde{g}_1,\ldots, \tilde{g}_{\omega-1}=g_0, \tilde{g}_\omega= y g_0$ be all suffixes of the word $y g_0$, ordered by length. For each integer $i$ with $0\le i \le \omega-1$, suppose that $\tilde{g}_i$ appears $b_i$ times in $g_1, g_2,\ldots, g_t$. Let $l$ be the index with $\tilde{g}_l=g_1$.

We define the preorder $\le_k$ generated by $\mu$ and $(\mu^{2g(K)-1} \lambda)^{-1}$ on the knot group $$\pi_1(S^3\setminus K)= \langle x,y| r_\lambda=1\rangle.$$ Since $xy^{-1}=\mu$, we have $x\ge_k y$. For each $0\le i \le \omega-1$, since $\tilde{g}_{i+1}=x\tilde{g}_{i}$ or $\tilde{g}_{i+1}=y\tilde{g}_{i}$, we have $\tilde{g}_{i+1}\ge_k y \tilde{g}_i$. Because $r_\lambda=1$, we have 
\begin{align*}
y&=(g_1\mu g_1^{-1})(g_2 \mu g_2^{-1})\cdots (g_t\mu g_t^{-1}) \\
&\ge_k (g_1\mu g_1^{-1})(\tilde{g}_i \mu \tilde{g}_i^{-1})^{b_i}
\end{align*} for each integer $i$ with $0\le i\le \omega-1$ and $i\neq l$. Therefore \begin{align}\label{partial-inequality}
\begin{split}
    \tilde{g}_{i+1}&\ge_k y \tilde{g}_i \\&\ge_k g_1 \mu  g_1^{-1} \tilde{g}_i \mu^{b_i}. 
    \end{split}
\end{align}

By applying the inequality (\ref{partial-inequality}) for $0\le i \le l-1$, we have 
\begin{align*}
    g_1 &= \tilde{g}_{l}\\
    &\ge_k (g_1 \mu  g_1^{-1})^{l} \tilde{g}_{0} \mu^{\sum_{i=0}^{l-1} b_i}\\
    &= g_1 \mu^l  g_1^{-1} \mu^{\sum_{i=0}^{l-1} b_i},
\end{align*}
$$g_1\ge_k \mu^{l+\sum_{i=0}^{l-1} b_i}.$$

By applying the inequality (\ref{partial-inequality}) for $l+1\le i \le \omega-1$, we have 
\begin{align*}
    y g_0 &= \tilde{g}_{\omega+1}\\
    &\ge_k (g_1 \mu  g_1^{-1})^{\omega-l-1} \tilde{g}_{l+1} \mu^{\sum_{i=l+1}^{\omega-1} b_i}\\
    &= g_1 \mu^{\omega-l-1}  g_1^{-1} \tilde{g}_{l+1} \mu^{\sum_{i=l+1}^{\omega-1} b_i}\\
    &\ge_k \mu^{\omega-1+\sum_{i=0}^{l-1} b_i} g_1^{-1} y \tilde{g}_{l} \mu^{\sum_{i=l+1}^{\omega-1} b_i} \\
    &= \mu^{\omega-1+\sum_{i=0}^{l-1} b_i} g_1^{-1} y g_1 \mu^{\sum_{i=l+1}^{\omega-1} b_i}.  
\end{align*}

Since $2g(K)-1= \omega t+b-t-\omega$, have 
\begin{align*}
    y g_0 &=\mu^{\omega t+b} \lambda \\
     &=\mu^{t+\omega}(\mu^{2g(K)-1}\lambda)\\
     &\le_k \mu ^{t+\omega}.
\end{align*}

Since $\sum_{i=0}^{\omega-1} b_i=t$, we have $$ y g_1 \le_k g_1 \mu ^{b_l+1}.$$

We first consider the case where $t>b_l$. In this case, let $l'$ be an index with $0\le l'\le \omega-1$ and $l'\neq l$ such that $b_{l'}\ge \frac{t-b_l}{\omega-1}>0$. There exists an integer $j$ with $0\le j \le b_l$, such that 
\begin{align*}
    g_1 \mu ^{b_l+1}&\ge_k y g_1\\
    &=(g_1\mu g_1^{-1})(g_2 \mu g_2^{-1})\cdots (g_t\mu g_t^{-1}) g_1 \\
&\ge_k (g_1\mu g_1^{-1})^j (\tilde{g}_{l'} \mu \tilde{g}_{l'}^{-1}) (g_1\mu g_1^{-1})^{b_l-j} g_1\\
&= g_1 \mu^j g_1^{-1} \tilde{g}_{l'} \mu \tilde{g}_{l'}^{-1} g_1 \mu^{b_l-j},
\end{align*}
which is $$g_1 \mu g_1^{-1}\ge_k \tilde{g}_{l'} \mu \tilde{g}_{l'}^{-1}.$$
Therefore \begin{align*}
    y \tilde{g}_{l'}&=  (g_1\mu g_1^{-1})(g_2 \mu g_2^{-1})\cdots (g_t\mu g_t^{-1})\tilde{g}_{l'}\\
    &\ge_k (\tilde{g}_{l'} \mu \tilde{g}_{l'}^{-1})^{b_l+b_{l'}}\tilde{g}_{l'}\\
    &= \tilde{g}_{l'} \mu ^{b_l+b_{l'}}.
\end{align*}
Since $\tilde{g}_{l'}$ is a suffix of $\tilde{g}_{l'} y g_0$, we have
\begin{align*}
    \tilde{g}_{l'}\mu^{t+\omega} 
    &\ge_k \tilde{g}_{l'} y g_0 \\
    &\ge_k y^{\omega} \tilde{g}_{l'}\\
    &\ge_k \tilde{g}_{l'} \mu^{\omega(b_l+b_{l'})},
\end{align*}
which is
\begin{equation}\label{last-1}
    \mu^{t+\omega- \omega(b_l+b_{l'})}\ge_k 1.
\end{equation} 
Furthermore, we have \begin{align*}
    t+\omega- \omega(b_l+b_{l'})&\le t+\omega-\omega b_l+(b_l-t-b_{l'})\\
    &\le -(\omega-1)(b_l-1)\\
    &\le 0.
\end{align*}

Then we consider the edge case where $t=b_l$, that is, $y= (g_1\mu g_1^{-1})^t =g_1\mu^t g_1^{-1}$. Because $g_1$ is a suffix of $g_1 y g_0$, we have 
\begin{align*}
    g_1 \mu^{t+\omega}&\ge_k g_1 y g_0\\
    &\ge_k  y^{\omega} g_1\\
    &= (g_1\mu^t g_1^{-1})^\omega g_1\\
    &=g_1 \mu^{\omega t},
\end{align*}
which is \begin{equation}\label{last-2}
    \mu^{t+\omega-\omega t}\ge_k 1.
\end{equation}
The exponent is not positive if and only if $t>1$. Now assume $t=1$. Since $K$ is not trivial, we have $b\ge 1$ Because $g_1$ is a suffix of $g_1 y g_0$, there exists an integer $j$ with $0\le j\le \omega-1$, such that
\begin{align*}
    g_1 \mu^{\omega+1}&\ge_k g_1 y g_0\\
    &\ge_k y^j x y^{\omega-j-1} g_1\\
    &= (g_1\mu g_1^{-1})^j \mu (g_1\mu g_1^{-1})^{\omega-j} g_1\\
    &=g_1 \mu^j g_1^{-1} \mu g_1 \mu^{\omega-j},
\end{align*}
which is 
$$y\ge_k \mu.$$
Since $x= \mu y\ge_k \mu^2$, we have 
\begin{equation}\label{last-3}
    \mu^{\omega+1}\ge_k y g_0 \ge_k \mu^{\omega+b}\ge_k\mu^{\omega+1}.
\end{equation}

In every case, we have reached a loop (\ref{last-1}) (\ref{last-2}) (\ref{last-3}) in the preorder $\le_k$. By the definition of $\le_k$, we can construct a computation tree where each leaf is $1$, $\mu$ or $(\mu^{2g(K)-1} \lambda)^{-1}$, each non-leaf node represents multiplication, conjugation or taking roots, and the root node is $1$. Furthermore, although there are many edge cases, we leave it to readers to check that in all cases, $\mu$ appears in the leaves at least once. Then every node in the computation tree is in the root-closed, conjugacy-closed submonoid generated by $\mu$ and $(\mu^{2g(K)-1} \lambda)^{-1}$. We prove the following statement.

\begin{lemma}
Every ancestor of a leaf $\mu$ is in the root-closed, conjugacy-closed submonoid generated by $\mu^{-1}$ and $\mu^{2g(K)-1} \lambda$.
\end{lemma}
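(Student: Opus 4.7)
The plan is to prove the lemma by induction along the path from the root of the computation tree down to a fixed leaf labeled $\mu$. Let $N$ denote the target submonoid, generated by $\mu^{-1}$ and $\mu^{2g(K)-1}\lambda$, and recall that the submonoid $M_k$ generated by $\mu$ and $(\mu^{2g(K)-1}\lambda)^{-1}$ contains the value of every node in the tree, by the paragraph immediately preceding the lemma.

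The key preliminary observation I would establish first is that $N$ coincides with the set $M_k^{-1} = \{f^{-1} : f \in M_k\}$. Since the generators of $N$ are exactly the inverses of the generators of $M_k$, one checks directly that $M_k^{-1}$ is a root-closed, conjugacy-closed submonoid: the identities $(fg)^{-1} = g^{-1} f^{-1}$, $(h f h^{-1})^{-1} = h f^{-1} h^{-1}$, and $(f^n)^{-1} = (f^{-1})^n$ translate each closure rule for $M_k$ into the corresponding rule for $M_k^{-1}$. Minimality of $N$ then gives $N \subseteq M_k^{-1}$, and the symmetric argument yields equality. Consequently, for every node $w$ of the computation tree, $\mathrm{val}(w) \in M_k$ implies $\mathrm{val}(w)^{-1} \in N$.

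With this in hand I fix a leaf $\mu$, consider the path $v_0, v_1, \ldots, v_d$ from the root $v_0$ to this leaf $v_d$, and prove $\mathrm{val}(v_i) \in N$ for $0 \le i < d$ by induction on $i$. The base case is immediate, since $\mathrm{val}(v_0) = 1 \in N$. For the inductive step I case-split on the operation at $v_i$. If $v_i$ is a multiplication node combining $v_{i+1}$ with a sibling $w$, then $\mathrm{val}(v_{i+1})$ equals $\mathrm{val}(v_i) \cdot \mathrm{val}(w)^{-1}$ or $\mathrm{val}(w)^{-1} \cdot \mathrm{val}(v_i)$ depending on the order; both factors lie in $N$ (the first by induction, the second by the preliminary observation), so $\mathrm{val}(v_{i+1}) \in N$ by multiplicative closure. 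If $v_i$ is the conjugation of $v_{i+1}$ by some $g \in G$, then $\mathrm{val}(v_{i+1})$ is a conjugate of $\mathrm{val}(v_i)$, hence in $N$ by conjugacy-closure. If $v_i$ is obtained by taking an $n$-th root of $v_{i+1}$, then $\mathrm{val}(v_{i+1}) = \mathrm{val}(v_i)^n \in N$ by multiplicative closure.

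The step I expect to be most delicate is the multiplication case, because one might worry that if the sibling subtree rooted at $w$ itself contains a $\mu$-leaf then the value of $w$ cannot be controlled. The preliminary observation defuses exactly this concern: we never need $\mathrm{val}(w)$ itself to lie in $N$, only its inverse, and the latter follows universally from the fact that $\mathrm{val}(w) \in M_k$ regardless of the contents of $w$'s subtree. The conjugation and root cases are then immediate from the defining closure properties of $N$, so the induction closes and the lemma is established.
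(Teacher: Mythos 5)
Your proposal is correct and takes essentially the same route as the paper: an induction down the computation tree from the root (value $1$) toward the $\mu$-leaf, handling the multiplication, conjugation, and root cases exactly as the paper does. The only difference is presentational --- you explicitly isolate and verify the key fact that the target submonoid equals $M_k^{-1}$ (so that the inverse of any sibling's value, which lies in $M_k$, lands in the target), whereas the paper uses this fact implicitly in its multiplication case.
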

\begin{proof}
Use induction on the depth of the node. When the depth is $0$, the node is the root node, which is identity. Suppose that the lemma is proved for nodes of depth $d-1$, consider a node $P$ of depth $d$. Then consider the parent of $P$, we have three cases.
First, suppose it is a multiplication node. Then $P$ is the multiplication of its parent and the inverse of its sibling. Since its parent is in the root-closed, conjugacy-closed submonoid generated by $\mu^{-1}$ and $\mu^{2g(K)-1} \lambda$, and its sibling is in the root-closed, conjugacy-closed submonoid generated by $\mu$ and $(\mu^{2g(K)-1} \lambda)^{-1}$, the statement is true for $P$.
Second, suppose it is a conjugation node. Then $P$ is a conjugate element of its parent. Therefore the statement is true for $P$.
Last, suppose it is a taking root node. Then $P$ is a positive power of its parent. Therefore the statement is true for $P$.
\end{proof}
\begin{corollary}\label{1-bridge-braid-have-D-part-b}
$\mu$ is in the root-closed, conjugacy-closed submonoid generated by $\mu^{-1}$ and $\mu^{2g(K)-1} \lambda$.
\end{corollary}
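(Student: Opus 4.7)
The plan is to read the corollary as the last step of the induction already set up in the Lemma. The discussion preceding the Lemma assembles a computation tree whose root represents $1$, whose leaves lie in $\{1,\mu,(\mu^{2g(K)-1}\lambda)^{-1}\}$, whose internal nodes are multiplications, conjugations, or root-extractions, and which, across all edge cases of the argument (the cases $t>b_l$, $t=b_l$ with $t>1$, and $t=1$ giving the loops \eqref{last-1}, \eqref{last-2}, \eqref{last-3}), is guaranteed to contain at least one leaf labelled $\mu$. I would fix such a $\mu$-leaf $L$ and let $Q$ be its tree-parent. The Lemma applied to $Q$, which is a proper ancestor of $L$, places $Q$ in the root-closed, conjugacy-closed submonoid $M'$ generated by $\mu^{-1}$ and $\mu^{2g(K)-1}\lambda$.

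To descend one more level, I would rerun the three-case analysis from the Lemma's inductive step at the edge joining $Q$ to $L$. If $Q$ is a multiplication node with sibling $S$ of $L$, then $L=QS^{-1}$ (or $S^{-1}Q$); since $S$ lies in the submonoid $M$ generated by $\mu$ and $(\mu^{2g(K)-1}\lambda)^{-1}$, its inverse lies in $M'$, and multiplicative closure of $M'$ places $L$ in $M'$. If $Q$ is a conjugation node, then $L$ is a conjugate of $Q$, and conjugacy-closure of $M'$ places $L$ in $M'$. If $Q$ is a root-extraction node, then $L=Q^n$ for some positive integer $n$, and closure under products places $L$ in $M'$. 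In every case, $\mu=L\in M'$, which is exactly the content of the corollary.

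The step I expect to be the main obstacle is not this concluding descent but the combinatorial bookkeeping deferred to the reader just before the Lemma: one must verify that in each of the three loops \eqref{last-1}, \eqref{last-2}, \eqref{last-3} the $\mu$ really does survive as a leaf of the computation tree rather than being absorbed into intermediate expressions built only from $\mu^{2g(K)-1}\lambda$ and $\mu^{-1}$. Once this check is carried out uniformly across the edge cases --- in particular the delicate $t=1$ subcase where the exponent arithmetic $\omega+1$ versus $\omega+b$ only becomes nontrivial for $b\ge 2$ and a separate argument is needed to produce $y\ge_k\mu$ --- the corollary is immediate, and together with Lemma \ref{no-fix-point-main} it completes the proof that nontrivial closures of $1$-bridge braids have property (D).
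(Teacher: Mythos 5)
Your proposal is correct and follows essentially the same route as the paper: the concluding ``descent'' you perform from the parent $Q$ to the leaf $L=\mu$ is exactly the inductive step of the paper's Lemma, which the paper's induction on depth already covers by treating the leaf as an ancestor of itself. You also correctly locate the real remaining burden in the deferred case-by-case check that $\mu$ genuinely occurs as a leaf of the computation tree arising from the loops (\ref{last-1}), (\ref{last-2}), (\ref{last-3}), which the paper likewise leaves to the reader.
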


Combining Lemma \ref{no-fix-point-main} and Corollary \ref{1-bridge-braid-have-D-part-b}, we proved Theorem \ref{main-variant}. 

Combining Theorem \ref{main-variant} and Theorem \ref{D-implies-not-orderable}, we proved Theorem \ref{main}.

\section{Satellite knots with $1$-bridge braid patterns} \label{satellite-section}

In this section, we will prove Theorem \ref{main-satellite}.

Let $P$ denote the $1$-bridge braid pattern $B(\omega,t,b)$, and $K$ denote a nontrivial knot with property (P). Suppose that $2g(K)-1\le \frac{t}{\omega}$. Let $\mu_K$ and $\lambda_K$ represent a meridian and a longitude of $K$ in the knot group of $K$. Let $\mu$ and $\lambda$ represent a meridian and a longitude of $P(K)$ in the knot group of $P(K)$. As in Section \ref{knot_group_sectiion}, by Seifert–van Kampen theorem, the knot group of $P(K)$ is
$$ \pi_1 (S^3\setminus P(K))= (\pi_1(S^3\setminus K)\ast \langle x,y \rangle)/(\mu_K=r_\mu, \lambda_K=r_\lambda). $$

First, we prove the part (a) of the property (D).

\begin{lemma}\label{no-fix-point-satellite}
For any homomorphism $\rho$ from $\pi_1(S^3\setminus P(K))$ to $\mbox{Homeo}_+(\mathbf{R})$, if $s\in \mathbf{R}$ is a common fixed point of $\rho(\mu)$ and $\rho(\lambda)$, then $s$ is a fixed point of every element in $\pi_1(S^3\setminus P(K))$.
\end{lemma}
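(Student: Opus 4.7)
The plan is to follow the template of Lemma \ref{no-fix-point-main}: first show that $\rho(x)$ and $\rho(y)$ both fix $s$ by a single application of Theorem \ref{fixed-point-general-rule}, then invoke property (D)(a) of the companion $K$ to upgrade this to the whole knot group $\pi_1(S^3\setminus P(K))$.

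A preliminary step is to verify that the Seifert longitude of $P(K)$ in $S^3$ still has the form $\lambda=yg_0\mu^{-\omega t-b}$, identical in shape to the standalone $1$-bridge braid case. This is a routine homology calculation in the satellite presentation: since $[\lambda_K]=0$ in $H_1(S^3\setminus K)$ and hence in $H_1(S^3\setminus P(K))$, the relation $\lambda_K=r_\lambda$ forces $[y]=t[\mu]$ in the abelianization, so exactly as in Section \ref{knot_group_sectiion} the exponent $-\omega t-b$ makes $yg_0\mu^{-\omega t-b}$ null-homologous.

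With $\lambda$ in hand, the hypothesis $\rho(\mu)(s)=\rho(\lambda)(s)=s$ immediately yields $\rho(yg_0)(s)=\rho(\mu)^{\omega t+b}\rho(\lambda)(s)=s$. I then apply Theorem \ref{fixed-point-general-rule} with $g_1=y$, $g_2=x$, $g_3=yg_0$ (a positive word in the alphabet $\{x,y\}$ whose leading letter is $y$) and $g_4=xy^{-1}$ (a word in the alphabet $\{x,y^{-1}\}$ containing the letter $y^{-1}$) to conclude $\rho(y)(s)=s$. Since $x=\mu y$, this in turn gives $\rho(x)(s)=\rho(\mu)\rho(y)(s)=\rho(\mu)(s)=s$.

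For the final step, $r_\mu$ and $r_\lambda$ are words in $x$ and $y$, so both $\rho(r_\mu)$ and $\rho(r_\lambda)$ fix $s$; via the relations $\mu_K=r_\mu$ and $\lambda_K=r_\lambda$, the pull-back of $\rho$ along the inclusion $\pi_1(S^3\setminus K)\hookrightarrow\pi_1(S^3\setminus P(K))$ sends both $\mu_K$ and $\lambda_K$ to homeomorphisms fixing $s$. Property (D)(a) of $K$ then guarantees that $s$ is fixed by $\rho$ on every element of $\pi_1(S^3\setminus K)$, and combined with $\rho(x)(s)=\rho(y)(s)=s$ this exhausts a generating set for $\pi_1(S^3\setminus P(K))$. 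The only substantive new ingredient over Lemma \ref{no-fix-point-main} is this one clean invocation of property (D)(a) of the companion; no real obstacle appears beyond the bookkeeping for $\lambda$.
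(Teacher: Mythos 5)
Your proposal is correct and follows essentially the same route as the paper: apply Theorem \ref{fixed-point-general-rule} with $g_3=yg_0=\mu^{\omega t+b}\lambda$ and $g_4=\mu=xy^{-1}$ to get $\rho(x)s=\rho(y)s=s$, then use $\mu_K=r_\mu$, $\lambda_K=r_\lambda$ and property (D)(a) of the companion to cover the rest of the generators. Your extra paragraph verifying that $\lambda=yg_0\mu^{-\omega t-b}$ still holds in the satellite complement (via $[\lambda_K]=0$ forcing $[y]=t[\mu]$) is a detail the paper leaves implicit under ``by reasoning similar to,'' and it is a worthwhile addition rather than a deviation.
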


\begin{proof}
By reasoning similar to the proof of Lemma \ref{no-fix-point-main}, we have $\rho(x)s=\rho(y)s=s$. Since $\mu_K=r_\mu$ and $\lambda_K=r_\lambda$ are in the subgroup generated by $x$ and $y$, we have $\rho(\mu_K)s =\rho(\lambda_K)s= s$. Since $K$ has property (D), $s$ is a fixed point of every element in $\pi_1(S^3\setminus K)$. Because $\pi_1(S^3\setminus P(K))$ is generated by $x$, $y$ and elements of $\pi_1(S^3\setminus K)$, $s$ is a fixed point of every element in $\pi_1(S^3\setminus P(K))$.
\end{proof}

Then, we prove the part (b) of the property (D).

We define the preorder $\le_k$ generated by $\mu$ and $(\mu^{2g(P(K))-1} \lambda)^{-1}$ on the knot group $\pi_1(S^3\setminus P(K))$. Since $xy^{-1}=\mu$, we have $x\ge_k y$. For each $0\le i \le \omega-1$, since $\tilde{g}_{i+1}=x\tilde{g}_{i}$ or $\tilde{g}_{i+1}=y\tilde{g}_{i}$, we have $\tilde{g}_{i+1}\ge_k y \tilde{g}_i$.

As in Section \ref{knot_group_sectiion}, we have
$$r_\mu = (h_1\mu h_1^{-1})(h_2\mu h_2^{-1})\cdots(h_\omega\mu h_\omega^{-1}),$$
$$r_\lambda= y (g_t \mu^{-1} g_t^{-1})(g_{t-1} \mu^{-1}  g_{t-1}^{-1})\cdots (g_1  \mu^{-1}  g_1^{-1}).$$
We also have that $g_i = h_j$ if $i\equiv j\pmod{\omega}$. Therefore, we have
$$r_\mu^{2g(K)-1} r_\lambda = y (g_t\mu^{-1 }g_t^{-1})(g_{t-1}\mu^{-1 }g_{t-1}^{-1})\cdots(g_{\omega (2g(K)-1)+1}\mu^{-1 }g_{\omega (2g(K)-1)+1}^{-1})$$
for each integer $i$ with $\omega(2g(K)-1)<i\le t$. Here we use the condition $2g(K)-1\le \frac{t}{\omega}$.

As in Section \ref{main-section}, let $\tilde{g}_0=1,\tilde{g}_1,\ldots, \tilde{g}_{\omega-1}=g_0, \tilde{g}_\omega= y g_0$ be all suffixes of the word $y g_0$, ordered by length. For each integer $i$ with $0\le i \le \omega-1$, suppose that $\tilde{g}_i$ appears $b_i$ times in $g_{\omega (2g(K)-1)+1}, g_{\omega (2g(K)-1)+2},\ldots, g_t$. Then we have
\begin{align*}
    r_\mu^{2g(K)-1} r_\lambda&\le_k y (\tilde{g}_i \mu^{-1}\tilde{g}_i^{-1})^{b_i}\\
    &=y \tilde{g}_i\mu^{-b_i}\tilde{g}_i^{-1},
\end{align*}
which implies
\begin{align}\label{partial-inequality-satellite}
\begin{split}
    \tilde{g}_{i+1} &\ge_k y\tilde{g}_{i}\\
    &\ge_k (r_\mu^{2g(K)-1} r_\lambda)\tilde{g}_i \mu^{b_i}
\end{split}
\end{align}
for each integer $i$ with $0\le i \le \omega-1$.

By applying the inequality (\ref{partial-inequality-satellite}) for $0\le i \le \omega-1$, we have
\begin{align*}
    \mu^{\omega t + b}\lambda&= yg_0\\
    &=\tilde{g}_\omega\\
    &\ge_{k} (r_\mu^{2g(K)-1} r_\lambda)^\omega \tilde{g}_0 \mu^{\sum_{i=0}^{\omega-1} b_i}\\
    &=(r_\mu^{2g(K)-1} r_\lambda)^\omega \mu^{\sum_{i=0}^{\omega-1} b_i}
\end{align*}

By \cite[Theorem 5.8.1]{Cro2}, we have $$g(P(K))\ge g(P)+\omega g(K).$$
Because 
$$\sum_{i=0}^{\omega-1} b_i=t-\omega(2g(K)-1)$$
and
$$g(P)=\frac{1}{2}((\omega-1)(t-1)+b),$$
we have
\begin{align*}
\omega t+b-\sum_{i=0}^{\omega-1} b_i &=    \omega t-t-\omega+b+2\omega g(K)\\
&=2g(P)+2\omega g(K)-1\\
&\le 2g(P(K))-1.
\end{align*}
Therefore, we have 
\begin{align*}
    (r_\mu^{2g(K)-1} r_\lambda)^\omega&\le_k \mu^{\omega t+b-\sum_{t=0}^{\omega-1} b_i} \lambda\\
    &\le_k \mu^{2g(P(K))-1}\lambda\\
    &\le_k 1.
\end{align*}
By Theorem \ref{preorder-root}, we have $r_\mu^{2g(K)-1} r_\lambda\le_k 1$. 

Because $K$ has property (D), $r_\mu$ is in the root-closed, normal submonoid generated by $r_\mu^{2g(K)-1} r_\lambda$ and $r_{\mu}^{-1}$, as this relation is invariant under free multiplication and quotient maps. Therefore we have $r_\mu\le_k 1$, as $r_\mu^{2g(K)-1} r_\lambda\le_k 1$ and $r_{\mu}^{-1}\le_k 1$.

Since $r_\mu$ is the product of $\omega$ conjugates of $\mu$, $\mu$ is in the root-closed, normal submonoid generated by $r_\mu$ and $\mu^{-1}$. Therefore we have $\mu\le_k 1$, as $r_\mu\le_k 1$ and $\mu^{-1}\le_k 1$.

By the definition of $\le_k$, we completed the proof of the part (a) of the property (D). Combined with Lemma \ref{no-fix-point-satellite}, we proved Theorem \ref{main-satellite}.

\section{Non-left-orderability for Dehn fillings on the manifold $v2503$}\label{v2503-section}

According to the software SnapPy \cite{CDW} and Varvarezos's computation \cite{Var}, the fundamental group of the manifold $v2503$ has the presentation
$$\pi_1(v2503)=\langle a,b| a^2 b^{-2} a b^{-2} a ^2 b a^2 b a b a^2 b=1\rangle,$$
and a meridian $\mu$ and a longitude $\lambda$ can be written as
$$\mu = b^{-2} a b^{-2} a b^{-1},$$
and
$$\lambda = a^{-2} b^{-1} a^{-2} b.$$

The following lemma is similar to Lemma \ref{no-fix-point-main} and Lemma \ref{no-fix-point-satellite}.

\begin{lemma}\label{no-fix-point-v2503}
For any homomorphism $\rho$ from $\pi_1(v2503)$ to $\mbox{Homeo}_+(\mathbf{R})$, if $s\in\mathbf{R}$ is a fixed point of $\rho(\mu)$, then $s$ is a fixed point of every element in $\pi_1(v2503)$.
\end{lemma}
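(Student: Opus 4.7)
The plan is to apply Theorem \ref{fixed-point-general-rule} twice, first to deduce $\rho(a)(s) = s$ and then to deduce $\rho(b)(s) = s$; since $\pi_1(v2503)$ is generated by $a$ and $b$, this completes the proof. The key preparation is to exhibit a second expression for $\mu$ purely in the alphabet $\{a^{-1}, b^{-1}\}$. Observing that $\mu b = b^{-2} a b^{-2} a$, the defining relator factors as
\[
a^2 b^{-2} a b^{-2} a^2 b a^2 b a b a^2 b \;=\; a^2 \mu V,
\]
where $V = b a b a^2 b a b a^2 b$ is a positive word in $a$ and $b$. From $a^2 \mu V = 1$ one obtains
\[
\mu \;=\; a^{-2} V^{-1} \;=\; a^{-2} b^{-1} a^{-2} b^{-1} a^{-1} b^{-1} a^{-2} b^{-1} a^{-1} b^{-1},
\]
a word containing both $a^{-1}$ and $b^{-1}$ and no positive letters.

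For the first application I take $g_1 = a^{-1}$ and $g_2 = b^{-1}$, choose $g_3 = \mu$ in the new form above (a word in $\{g_1, g_2\}$ containing $g_1 = a^{-1}$), and $g_4 = \mu$ in the original form $b^{-2} a b^{-2} a b^{-1}$ (a word in $\{g_1^{-1}, g_2\} = \{a, b^{-1}\}$ containing $g_1^{-1} = a$). Both $g_3$ and $g_4$ represent the same element $\mu$, which fixes $s$ by hypothesis, so Theorem \ref{fixed-point-general-rule} yields $\rho(a^{-1})(s) = s$, hence $\rho(a)(s) = s$. For the second application I take $g_1 = b^{-1}$ and $g_2 = a^{-1}$, choose $g_3 = \mu$ in the new form (now viewed as a word in $\{b^{-1}, a^{-1}\}$ containing $g_1 = b^{-1}$), and $g_4 = \mu^{-1} = b a^{-1} b^2 a^{-1} b^2$ (a word in $\{g_1^{-1}, g_2\} = \{b, a^{-1}\}$ containing $g_1^{-1} = b$). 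Since $\rho(\mu)(s) = s$ forces $\rho(\mu^{-1})(s) = s$, both $g_3$ and $g_4$ fix $s$, and the theorem gives $\rho(b^{-1})(s) = s$, hence $\rho(b)(s) = s$.

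The only nontrivial step is the factorization of the defining relator. The requirement is to locate a subword of the relator that coincides with $\mu$ up to positive-word padding on each side; the appearance of $b^{-2} a b^{-2} a = \mu b$ as a subword makes the factorization $W = a^2 \mu V$ essentially forced. Once this single rewriting is in hand, the two expressions for $\mu$ and its inverse together cover exactly the alphabets required to run Theorem \ref{fixed-point-general-rule} once for $g_1 = a^{-1}$ and once for $g_1 = b^{-1}$, and the conclusion follows.
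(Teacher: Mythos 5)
Your proposal is correct and follows essentially the same route as the paper: both factor the relator around the subword $\mu b = b^{-2}ab^{-2}a$ to obtain a second word representative of the meridian (you solve for $\mu = a^{-2}V^{-1}$ as a negative word, the paper equivalently records $\mu^{-1} = Va^2$ as a positive word), and then apply Theorem \ref{fixed-point-general-rule} twice with the two alphabets to fix first one generator and then the other. The only difference is the cosmetic choice of signs in the letter assignments $g_1, g_2, g_3, g_4$.
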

\begin{proof}
Since
\begin{align*}
    1&=a^2 b^{-2} a b^{-2} a ^2 b a^2 b a b a^2 b\\
    &=a^2\mu b a b a^2 b a b a^2 b,
\end{align*}
we have $$\mu^{-1}=b a b a^2 b a b a^2 b a^2.$$

By taking $g_1=b, g_2=a, g_3=\mu^{-1}, g_4=\mu$ in Theorem \ref{fixed-point-general-rule}, we get $\rho(b)s=s$.

By taking $g_1=a, g_2=b, g_3=\mu^{-1}, g_4=\mu^{-1}$ in Theorem \ref{fixed-point-general-rule}, we get $\rho(a)s=s$.

Since the group $\pi(v2503)$ is generated by $a$ and $b$, $s$ is a fixed point of every element in $\pi_1(v2503)$.
\end{proof}

The following theorem is an analog of the part (b) in the definition of property (D).

\begin{theorem}\label{has-fix-point-v2503}
$\lambda$ is in the root-closed, conjugacy-closed submonoid generated by $\lambda^{-1}$.
\end{theorem}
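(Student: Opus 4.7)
The plan is to adapt the preorder-based strategy used for Theorem \ref{main-variant}. Let $\le_k$ denote the preorder on $\pi_1(v2503)$ generated by $S_k = \{\lambda^{-1}\}$. By definition $\lambda^{-1} \le_k 1$, and the conclusion of Theorem \ref{has-fix-point-v2503} is equivalent to the inequality $\lambda \le_k 1$. I would derive this inequality by explicit manipulation of the sole defining relator
\[
r = a^2 b^{-2} a b^{-2} a^2 b a^2 b a b a^2 b = 1,
\]
together with the peripheral commutation $\mu \lambda = \lambda \mu$, which holds automatically since $\mu$ and $\lambda$ lie in the boundary-torus subgroup of the cusped manifold $v2503$.

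The first step is to locate copies of $\lambda^{-1} = b^{-1} a^2 b a^2$ inside $r$. The simplest identification is the substring equality $a^2 b a^2 = b \lambda^{-1}$, which occurs twice in $r$ (at positions corresponding to $\ldots b^{-2} (a^2 b a^2) b (a^2 b a^2) b \ldots$). Substituting these yields an equation of the shape
\[
1 = W_1 \cdot \lambda^{-1} \cdot W_2 \cdot \lambda^{-1} \cdot W_3,
\]
for explicit words $W_1, W_2, W_3$ in $a^{\pm 1}, b^{\pm 1}$, which can in turn be rewritten using the identity $\mu = b^{-2} a b^{-2} a b^{-1}$. Isolating $\lambda$ and applying the invariance of $\le_k$ under left and right multiplication, together with its closure under conjugation, should transpose this equation into an inequality of the form $\lambda^N \le_k 1$ for some positive integer $N$; then Theorem \ref{preorder-root} delivers $\lambda \le_k 1$.

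The main obstacle is the asymmetry: the generating set contains $\lambda^{-1}$ but not $\lambda$, so every factor introduced along the way must already be known to be $\le_k 1$. One cannot silently cancel a stray $\lambda^{+1}$ against something on the other side of an inequality. The bookkeeping therefore requires that each residual word $W_i$ either reduce to the identity via a further application of $r = 1$, or split further into conjugates of $\lambda^{-1}$. Given the fourteen-letter length of $r$ and the relatively small number of visible $\lambda^{-1}$-substrings, I expect the decisive computational step to be the use of $\mu\lambda = \lambda\mu$ to commute $\mu$-factors past $\lambda^{-1}$-conjugates so that what remains collapses to a positive power of $\lambda$. Modulo that bookkeeping, the high-level skeleton is identical to the $\le_k$-estimate computation carried out in Section \ref{main-section}.
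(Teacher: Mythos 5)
Your opening move is the right one and coincides with the paper's: the relator ends in $a^2b$, so writing
$r=a^2b^{-2}ab^{-2}\,(a^2ba^2)\,bab\,(a^2ba^2)\,a^{-2}$ and substituting $a^2ba^2=b\lambda^{-1}$ at both occurrences is exactly how the paper begins (note the trailing $a^{-2}$ needed to complete the second copy, which your position sketch omits). But the step you defer as ``bookkeeping'' is where the entire content of the proof lives, and your prediction of how it goes is wrong. After replacing both conjugates of $\lambda^{-1}$, the residual word is
\begin{equation*}
W_1W_2W_3=a^2b^{-2}ab^{-1}\cdot bab^2\cdot a^{-2}=a^2b^{-2}a^2b^2a^{-2},
\end{equation*}
a single conjugate of $a^2$ --- not a power of $\lambda$. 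So the inequality you obtain is between $a^2$ and $1$, Theorem \ref{preorder-root} is never invoked, and the peripheral commutation $\mu\lambda=\lambda\mu$ plays no role whatsoever ($\mu$ never enters the computation). The actual endgame is to observe that $\lambda=a^{-2}b^{-1}a^{-2}b$ is the product of two conjugates of $a^{-2}$ (times $b^{-1}b$), so once $a^2$ is comparable to $1$ in the appropriate direction, so is $\lambda$, and that comparison is precisely the statement of the theorem. A plan that expects the residual to collapse to $\lambda^N$ and to need $\mu$-commutation would not close up as written.

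There is also a direction error that would make a literal execution circular. With $S_k=\{\lambda^{-1}\}$, the definitional inequality is $1\le_k\lambda^{-1}$, i.e.\ $\lambda\le_k 1$, and the conclusion $\lambda\in M$ is equivalent to $1\le_k\lambda$ (equivalently $\lambda^{-1}\le_k 1$). You state both the other way around; if you start the substitution from ``$\lambda^{-1}\le_k 1$'' you are assuming the theorem. Getting the signs right matters here because the substitution $a^2ba^2\rightarrow b$ is a one-sided estimate: from $\lambda^{-1}\ge_k 1$ you get $a^2ba^2\ge_k b$, hence $1=r\ge_k a^2b^{-2}a^2b^2a^{-2}$, hence $a^2\le_k 1$, hence $\lambda\ge_k 1$, which is the desired conclusion. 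The skeleton is salvageable, but as proposed the decisive steps are either missing or pointed in the wrong direction.
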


\begin{proof}
We define the preorder $\le_k$ generated by $\lambda$ on $\pi_1(v2503)$. Then $\lambda\ge_k 1$ implies that $a^2 b a^2\le_k b$. Therefore, we have
\begin{align*}
     1&=a^2 b^{-2} a b^{-2} a ^2 b a^2 b a b a^2 b\\
     &=a^2 b^{-2} a b^{-2} (a ^2 b a^2) b a b (a^2 b a^2)a^{-2}\\
     &\le_k a^2 b^{-2} a b^{-2} b b a b b a^{-2}\\
     &=a^2 b^{-2} a^2 b^2 a^{-2},
\end{align*}
which is $a^2\ge_k 1$. Therefore, we have
\begin{align*}
    \lambda &= a^{-2} b^{-1} a^{-2} b\\
    &\le_k b^{-1} b\\
    &=1.
\end{align*}
By the definition of preorder $\le_k$, $\lambda$ is in the root-closed, conjugacy-closed submonoid generated by $\lambda^{-1}$.
\end{proof}

Now we prove Theorem \ref{main-v2503}.

By Seifert-van Kampen theorem, the fundamental group of an manifold obtained by Dehn filling on $v2503$ with slope $\frac{p}{q}$, denoted by $G$, is the quotient group of $\pi_1(v2503)$ by the relation $\mu^p\lambda^q=1$. Let $\rho: G\to \mbox{Homeo}_+(\mathbf{R})$ be a monomorphism without global fixed points. As a pullback by inclusion, $\rho$ induces a homomorphism from $\pi_1(v2503)$ to $\mbox{Homeo}_+(\mathbf{R})$ without global fixed points. Since $\mu^p=\lambda^{-q}$, we have $\rho(\mu)^p=\rho(\lambda)^{-q}$. By Lemma \ref{no-fix-point-v2503}, $\rho(\mu)$ does not have fixed points.

If $p\neq 0$, then $\rho(\lambda)$ does not have fixed points. By Corollary \ref{preorder-partial-order}, $\le_k$ is a partial order. By Theorem \ref{has-fix-point-v2503}, we have $1\le_k \lambda\le_k 1$, so $\lambda=1$ which contradicts to that $\rho(\lambda)$ does not have fixed points.

If $p=0$, then we get the equalities $\lambda=1$, $a^2 b a^2=b$ and $a^2=1$. Then $G$ can be simplified to $\mathbf{Z}\ast(\mathbf{Z}/2\mathbf{Z})$. The existence of a nontrivial torsion element implies the non-left-orderability.

Therefore, Theorem \ref{main-v2503} holds.

\section{L-space twisted torus knots of form $T_{p,kp\pm 1}^{l,m}$ are closures of $1$-bridge braids}\label{braid-word-section}

In this section, we will complete the proof of Theorem \ref{main-braid-word}.

For any positive integer $m$, let $\pi_m$ denote the word $\sigma_m \sigma_{m-1}\cdots \sigma_1$, and let $\Pi_m$ denote the word $\pi_1 \pi_2\cdots \pi_m$. As $\Pi_m$ denotes the half twist on the first $m-1$ strands and $\pi_m^{m+1}$ denotes the full twist on the first $m-1$ strands, we have $$\pi_m^{m+1}=\Pi_m^2.$$ We also have $$\sigma_i \pi_m^{m+1}=\pi_m^{m+1}\sigma_i$$ and $$\sigma_i\Pi_m=\Pi_m\sigma_{m+1-i}$$ for each integer $i$ with $1\le i\le m$. 

For each integer $i$ with $1\le i \le m-1$, we have \begin{align*}
    \sigma_i \pi_m&= \sigma_i (\sigma_m \sigma_{m-1}\cdots \sigma_1)\\
    &= (\sigma_m \sigma_{m-1}\cdots\sigma_{i+2})(\sigma_i \sigma_{i+1} \sigma_{i})(\sigma_{i-1} \sigma_{i-2}\cdots \sigma_1)\\
    &= (\sigma_m \sigma_{m-1}\cdots\sigma_{i+2})(\sigma_{i+1} \sigma_{i} \sigma_{i+1})(\sigma_{i-1} \sigma_{i-2}\cdots \sigma_1)\\
    &=(\sigma_m \sigma_{m-1}\cdots \sigma_1)\sigma_{i+1}\\
    &=\pi_m \sigma_{i+1}.
\end{align*}

Then we have 
\begin{align*}
\pi_m^2 &= \pi_m(\sigma_m\sigma_{m-1}\cdots\sigma_1)\\
&=\sigma_{m-1}\pi_m(\sigma_{m-1}\sigma_{m-2}\cdots\sigma_1)\\
&=(\sigma_{m-1}\sigma_{m-2})\pi_m(\sigma_{m-2}\sigma_{m-3}\cdots\sigma_1)\\
&=\cdots\\
&=(\sigma_{m-1}\sigma_{m-2}\cdots\sigma_1)\pi_m\sigma_1\\
&=\pi_{m-1}\pi_m \sigma_1,
\end{align*}
and 
\begin{align*}
    \sigma_m \pi_m^2 &= \sigma_m \pi_{m-1}\pi_m \sigma_1\\
    &=\pi_m^2 \sigma_1.
\end{align*}

For a positive integer $s$ with $2\le s\le m$, we have
\begin{align*}
    \pi_m^s&=\pi_{m-1}\pi_m\sigma_1 \pi_m^{s-2}\\
    &= \pi_{m-1}\pi_m^2 \sigma_2 \pi_m^{s-3}\\
    &= \cdots\\
    &= \pi_{m-1}\pi_m^{s-1}\sigma_{s-1}.
\end{align*}

Then we have 
\begin{align*}
    \pi_m^s&=\pi_{m-1}\pi_m^{s-1}\sigma_{s-1}\\
    &=\pi_{m-1}^2\pi_m^{s-2}\sigma_{s-2}\sigma_{s-1}\\
    &= \cdots\\
    &=\pi_{m-1}^{s-1}\pi_m(\sigma_1\sigma_2\cdots\sigma_{s-1}).
\end{align*}

By Markov braid theorem, the closure of two braids $f$ and $g$ represent the same oriented link if and only if one can be transform into another by a sequence of Markov moves. In this case, we say $f\sim g$.

Suppose that $t\le b\le \omega-1$, then we have
\begin{align*}
    \pi_b \pi_{\omega-1}^t&= \pi_b \pi_{\omega-2}^{t-1}\pi_{\omega-1} (\sigma_1\sigma_2\cdots\sigma_{t-1}) \\
    &\sim \pi_b \pi_{\omega-2}^{t} (\sigma_1\sigma_2\cdots\sigma_{t-1}) \\
    &=\pi_b \pi_{\omega-3}^{t-1}\pi_{\omega-2} (\sigma_1\sigma_2\cdots\sigma_{t-1})^2 \\
    &\sim \pi_b \pi_{\omega-3}^{t} (\sigma_1\sigma_2\cdots\sigma_{t-1})^2\\
    &\sim \cdots\\
    &\sim \pi_b^{t+1} (\sigma_1\sigma_2\cdots\sigma_{t-1})^{\omega-b-1}\\
    &\sim \cdots\\
    &\sim \pi_t^{t+1} (\sigma_1 \sigma_2\cdots\sigma_{t})^{b-t}(\sigma_1 \sigma_2\cdots\sigma_{t-1})^{\omega-b-1}\\
    &\sim \Pi_t \pi_t^{t+1} (\sigma_1 \sigma_2\cdots\sigma_{t})^{b-t}(\sigma_1 \sigma_2\cdots\sigma_{t-1})^{\omega-b-1} \Pi_t^{-1}\\
    &=\pi_t^{b+1}(\sigma_{t}\sigma_{t-1}\cdots \sigma_2)^{\omega-b-1} \\
    &=\pi_t^{b}\pi_{t-1}^{\omega-b-1} \pi_t\\
    &\sim \pi_{t-1}^{\omega-b-1}\pi_t^{b+1}.
\end{align*}

When $q\ge l+1 =p$, we have $\pi_{l-1}^n \pi_{p-1}^q=\pi_{p-2}^n \pi_{p-1}^q \sim \pi_{q-1}\pi_{n+q-1}^{p-1}$. Therefore Theorem \ref{main-braid-word} holds with condition (a).

When $q=l$ divides $n$ and $\gcd(p,q)=1$, by definition, the twisted torus knot $T_{p,q}^{q,\frac{n}{q}}$ is the torus knot $T_{p+n,q}$. So Theorem \ref{main-braid-word} holds with condition (b).

Now we consider the condition (c). If $p=2$, then the twisted torus knot $T_{2,q}^{2,1}$ is the torus knot $T_{2,q+2}$. If $q=1$, then the twisted torus knot $T_{p,1}^{2,1}$ is the torus knot $T_{2,3}$. If $q=2$, then the twisted torus knot $T_{p,2}^{2,1}$ is the torus knot $T_{p+2,2}$. So we assume $p,q\ge 3$.

we prove the following elementary number theoretic lemma.

\begin{lemma}
Suppose $p,q\ge 3$ are coprime positive integers. Let $x$ be the multiplicative inverse of $p$ modulo $q$ with $0\le x<q$, and $y$ be the multiplicative inverse of $q$ modulo $p$ with $0\le y<p$. Then $x < \frac{q}{2}$ or $y < \frac{p}{2}$.
\end{lemma}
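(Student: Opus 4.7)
The plan is to reduce the inequality claim to a single Diophantine identity and then rule out the simultaneous failure of both bounds by a short case check. The first step is to note that by definition $px \equiv 1 \pmod q$ and $qy \equiv 1 \pmod p$, so reducing $px + qy$ modulo $p$ and modulo $q$ both yield $1$, whence $px + qy \equiv 1 \pmod{pq}$ by the Chinese remainder theorem. Since $\gcd(p,q) = 1$ and $p, q \ge 3$, the inverses satisfy $1 \le x \le q - 1$ and $1 \le y \le p - 1$, so $p + q \le px + qy \le 2pq - p - q$. Using $(p-1)(q-1) \ge 2$, one checks that the only integer in this range congruent to $1$ modulo $pq$ is $pq + 1$, giving the key identity
\[
px + qy = pq + 1.
\]

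With this identity in hand, I would argue by contradiction: suppose $x \ge q/2$ and $y \ge p/2$. Then $A := 2x - q$ and $B := 2y - p$ are nonnegative integers, and a direct computation gives
\[
pA + qB = 2(px + qy) - 2pq = 2.
\]
Since $p, q \ge 3$, no nonnegative integer pair $(A, B)$ can satisfy $pA + qB = 2$: the pair $(0,0)$ yields $0$, and any contribution from $A \ge 1$ or $B \ge 1$ is at least $3$. This contradicts the assumption, so at least one of $x < q/2$ or $y < p/2$ must hold.

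I do not anticipate a serious obstacle. The main step is pinning down $px + qy$ to the single value $pq + 1$, which relies on the strict bounds coming from $p, q \ge 3$; once that identity is in place, the rest reduces to a one-line finite check on nonnegative integer solutions of $pA + qB = 2$.
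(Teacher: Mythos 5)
Your proof is correct and takes essentially the same approach as the paper: both arguments rest on the observation that $px+qy\equiv 1\pmod{pq}$ (the paper phrases this as $2pq-px-qy+1$ being a multiple of $pq$) combined with the size bounds $1\le x\le q-1$ and $1\le y\le p-1$. Your endgame is slightly cleaner --- pinning down $px+qy=pq+1$ exactly and reducing to the impossibility of $pA+qB=2$ in nonnegative integers avoids the parity case analysis ("either $x\ge\frac{q+1}{2}$ or $y\ge\frac{p+1}{2}$") that the paper needs in order to make its upper bound strictly less than $pq$.
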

\begin{proof}
The integer $2pq-px -qy + 1$ is a multiple of $p$ and a multiple of $q$, so $2pq-px -qy + 1$ is a multiple of $pq$. If $\frac{q}{2}\le x\le q-1$ and $\frac{p}{2}\le y\le p-1$, as either $x\ge\frac{q+1}{2}$ or $y\ge \frac{p+1}{2}$, we have $0<p+q+1 \le 2pq-xq-yp+1 \le pq-\frac{1}{2}\min(p,q)+1 <pq$, which is a contradiction. 
\end{proof}

For each integer $i,j$ with $1\le i,j\le p-1$ and $j\equiv i+q\pmod{p}$, we have
\begin{align*}
    \sigma_i \pi_{p-1}^q&= \pi_{p-1}\sigma_{i+1} \pi_{p-1}^{q-1}\\
    &= \pi_{p-1}^2\sigma_{i+2} \pi_{p-1}^{q-2}\\
    &= \cdots\\
    &= \pi_{p-1}^{kp-1-i}\sigma_{p-1} \pi_{p-1}^{q-(kp-1-i)}\\
    &= \pi_{p-1}^{kp+1-i}\sigma_{1} \pi_{p-1}^{q-(kp+1-i)}\\
    &= \cdots \\
    &= \pi_{p-1}^q \sigma_{j}.
\end{align*}

If there exists a multiplicative inverse $y$ of $q$ modulo $p$ with $0\le y<\frac{p}{2}$, then $1+ s q\not \equiv 0 \pmod p$ for $s=1,2,\ldots,y-1$. Therefore there exists a sequence $1=i_0, i_1, i_2,\ldots, i_y=2$ such that $i_s\equiv i_{s-1}+q\pmod p$ for $s=1,2,\ldots,y$ and $3\le i_s\le p-1$ for $s=1,2,\ldots,y-1$.
\begin{align*}
    \sigma_1^2 \pi_{p-1}^q&=\sigma_1\sigma_{i_0} \pi_{p-1}^q\\
    &=\sigma_1 \pi_{p-1}^q \sigma_{i_1}\\
    &\sim  \sigma_{i_1} \sigma_1 \pi_{p-1}^q\\
    &= \sigma_1 \sigma_{i_1}  \pi_{p-1}^q\\
    &\sim\cdots \\
    &\sim  \sigma_{i_s} \sigma_1 \pi_{p-1}^q\\
    &=  \sigma_{2} \sigma_1 \pi_{p-1}^q\\
    &=\pi_2 \pi_{p-1}^q
\end{align*}

Suppose that a multiplicative inverse $x$ of $p$ modulo $q$ satisfies $0\le x<\frac{q}{2}$. By definition, the twisted torus knot $T_{p,q}^{2,1}$ is the twisted torus knot $T_{q,p}^{2,1}$. So we have
\begin{align*}
    \sigma_1^2 \pi_{p-1}^q &\sim \sigma_1^2\pi_{q-1}^p\\
    &\sim \pi_2 \pi_{q-1}^p.
\end{align*}

Therefore Theorem \ref{main-braid-word} holds with condition (c).

Since we have
\begin{align*}
    \pi_{p-3}^{p-2} \pi_{p-1}^{kp+1}
    &= \pi_{p-3}^{p-3} \pi_{p-1}^{kp+1}(\sigma_{p-2}\sigma_{p-3}\cdots \sigma_2)\\
    &\sim (\sigma_{p-2}\sigma_{p-3}\cdots \sigma_2)\pi_{p-3}^{p-3} \pi_{p-1}^{kp+1} \\    
    &= \pi_{p-2}\pi_{p-3}^{p-4}\pi_{p-4} \pi_{p-1}^{kp+1} \\
    &= \pi_{p-2}\pi_{p-3}^{p-4}\pi_{p-1}^{kp+1} 
    (\sigma_{p-3}\sigma_{p-4}\cdots \sigma_2)\\
    &\sim (\sigma_{p-3}\sigma_{p-4}\cdots \sigma_2)\pi_{p-2}\pi_{p-3}^{p-4}\pi_{p-1}^{kp+1} 
    \\
    &= \pi_{p-2}(\sigma_{p-2}\sigma_{p-3}\cdots \sigma_3)\pi_{p-3}^{p-4}\pi_{p-1}^{kp+1} 
    \\    
    &= \pi_{p-2}^2\pi_{p-3}^{p-5}\pi_{p-5}\pi_{p-1}^{kp+1} 
    \\    
    &\sim \cdots
    \\
    &\sim \pi_{p-2}^{p-3}\pi_{p-1}^{kp+1},
\end{align*}
Theorem \ref{main-braid-word} with condition (a) implies Theorem \ref{main-braid-word} with condition (d) and $q=kp+1$.

By definition, the twisted torus knot $T_{p,kp-1}^{p-3,1}$ is the twisted torus knot $T_{kp-1,p}^{p-3,1}$, so we have
\begin{align*}
    \pi_{p-3}^{p-2} \pi_{p-1}^{kp-1}
    &\sim \pi_{p-3}^{p-2} \pi_{kp-2}^{p}\\
    & = \pi_{p-3}^{p-3} \pi_{kp-2}^{p}(\sigma_{2p-3}\sigma_{2p-4}\cdots\sigma_{p+1})\\
    &\sim(\sigma_{2p-3}\sigma_{2p-4}\cdots\sigma_{p+1}) \pi_{p-3}^{p-3} \pi_{kp-2}^{p}\\
    &= \pi_{p-3}^{p-3} (\sigma_{2p-3}\sigma_{2p-4}\cdots\sigma_{p+1})\pi_{kp-2}^{p}\\
    &\sim\cdots\\
    &\sim \pi_{p-3}^{p-3} (\sigma_{kp-3}\sigma_{kp-4}\cdots\sigma_{(k-1)p+1})\pi_{kp-2}^{p}\\
    &= \pi_{p-3}^{p-3}\pi_{kp-2}^{p} (\sigma_{p-2}\sigma_{p-3}\cdots \sigma_2).
\end{align*}

As in the case with condition (d) and $q=kq+1$, we have
\begin{align*}    
\pi_{p-3}^{p-2} \pi_{p-1}^{kp-1}
    &\sim \pi_{p-3}^{p-2} \pi_{kp-2}^{p}\\
    &\sim \pi_{p-3}^{p-3}\pi_{kp-2}^{p} (\sigma_{p-2}\sigma_{p-3}\cdots \sigma_2)\\
    &\sim \cdots\\
    &\sim \pi_{p-2}^{p-3} \pi_{kp-2}^p.
\end{align*}

If $k=1$, then the closure of the braid is a torus knot. 

Suppose that $k\ge 2$. As in the case with condition (a), we have 
\begin{align*}
    \pi_{p-3}^{p-2} 
    &\sim \pi_{p-2}^{p-3} \pi_{kp-2}^p\\
    &\sim \pi_{p-2}^{p-3} \pi_{kp-3}^p (\sigma_1 \sigma_2\cdots \sigma_{p-1})\\
    &\sim \pi_{p-2}^{p-3} \pi_{kp-4}^p (\sigma_1 \sigma_2\cdots \sigma_{p-1})^2\\
    &\sim\cdots\\
    &\sim \pi_{p-2}^{p-3} \pi_{p-1}^p (\sigma_1 \sigma_2\cdots \sigma_{p-1})^{(k-1)p -1}\\
    &= \pi_{p-2}^{2p-4} \pi_{p-1} (\sigma_1 \sigma_2\cdots \sigma_{p-1})^{(k-1)p }\\
    &=\pi_{p-2}^{2p-4} \pi_{p-1} (\Pi_{p-1} \pi_{p-1}\Pi_{p-1}^{-1})^{(k-1)p }\\
    &=\pi_{p-2}^{2p-4} \pi_{p-1} \Pi_{p-1} \pi_{p-1}^{(k-1)p }\Pi_{p-1}^{-1}\\
    &=\pi_{p-2}^{2p-4} \pi_{p-1} \Pi_{p-1} \Pi_{p-1}^{2(k-1) }\Pi_{p-1}^{-1}\\
    &=\pi_{p-2}^{2p-4} \pi_{p-1} \Pi_{p-1}^{2(k-1) }\\
    &=\pi_{p-2}^{2p-4}\pi_{p-1}^{(k-1)p+1}.
\end{align*}
Theorem \ref{main-braid-word} with condition (a) implies Theorem \ref{main-braid-word} with condition (d) and $q=kp-1$.

Therefore, Theorem \ref{main-braid-word} is proved.

\end{document}